\setlist[enumerate,1]{label = \normalfont(\arabic*), ref = (\arabic*)}
\xpatchcmd{\paragraph}{\normalfont}{{\normalfont\bfseries}}{}{}
\newcommand{\proofstep}[1]{
  \par
  \addvspace{\medskipamount}
  \textit{#1\@addpunct{.}}\enspace\ignorespaces
}
\newtheorem{theorem}{Theorem}[section]
\newtheorem{lemma}[theorem]{Lemma}
\newtheorem{proposition}[theorem]{Proposition}
\newtheorem{corollary}[theorem]{Corollary}
\newenvironment{assumption}[1]
{\taggedtheoremx}
{\endtaggedtheoremx}
\theoremstyle{definition}
\newtheorem{definition}[theorem]{Definition}
\theoremstyle{remark}
\newtheorem{remark}[theorem]{Remark}
\newtheorem{example}[theorem]{Example}
\newcommand{\R}{\mathbb{R}}
\newcommand{\C}{\mathbb{C}}
\newcommand{\N}{\mathbb{N}}
\newcommand{\D}{\mathbb{D}}
\renewcommand{\L}{\mathrm{L}}
\let\accentH\H              
\renewcommand{\H}{\mathrm{H}}
\newcommand{\W}{\mathrm{W}}
\newcommand{\Cont}{\mathrm{C}}
\newcommand{\one}{\mathbf{1}}
\renewcommand{\i}{\mathrm{i}}
\newcommand{\e}{\mathrm{e}}
\renewcommand{\Re}{\operatorname{Re}}
\renewcommand{\Im}{\operatorname{Im}}
\newcommand{\les}{\lesssim}
\newcommand{\ccdot}{\, \cdot \,}
\renewcommand{\d}{ \partial }
\newcommand{\summ}{\sum\nolimits}
\newcommand{\ddt}{\frac{\mathrm{d}}{\mathrm{d}t}}
\newcommand{\ddxtwo}{\frac{\mathrm{d}^2}{\mathrm{d}x}}
\DeclareMathOperator{\divergence}{div}
\renewcommand{\div}{\divergence}
\DeclareMathOperator{\loc}{loc}
\DeclareMathOperator{\spann}{span}
\DeclareMathOperator{\curl}{curl}
\DeclareMathOperator{\dif}{d}
\DeclareMathOperator{\tr}{tr}
\newcommand{\norm}[1]{\left\lVert#1\right\rVert}
\newcommand{\abs}[1]{\left\lvert#1\right\rvert}
\newcommand{\abss}[1]{\lvert#1\rvert}
\newcommand{\scal}[1]{\left\langle#1\right\rangle}
\newcommand{\sett}[1]{ \left\{#1\right\} }
\providecommand{\leftsquigarrow}{
  \mathrel{\mathpalette\reflect@squig\relax}
}
\newcommand{\reflect@squig}[2]{
  \reflectbox{$\m@th#1\rightsquigarrow$}
}
\title[Eigenvalue inequalities for elliptic operators]{The Levine--Weinberger and Friedlander--Filonov inequalities for some classes of elliptic operators}
\author[T. Schmatzler]{Timotheus Schmatzler}
\address{Matematiska institutionen, Stockholms universitet, 106 91 Stockholm, Sweden}
\email{timotheus.schmatzler@math.su.se}
\begin{document}

\begin{abstract}
    We consider the eigenvalue problem for certain classes of elliptic operators, namely inhomogeneous membrane operators $ L = \tfrac{1}{ \rho } ( -\Delta + V ) $ and divergence form operators $ L = -\div A \nabla $, on bounded domains.
    For these operators, we prove ordering inequalities between the Dirichlet and the Neumann eigenvalues, generalizing results of Levine--Weinberger and Friedlander--Filonov for the Laplacian.
    We take inspiration from their proofs and derive sufficient conditions on the coefficients of the operator that ensure that the inequalities remain valid. 
\end{abstract}

\maketitle

\section{Introduction}

Let $ \Omega \subseteq \R^d $, $ d \in \N $, be a bounded Lipschitz domain\footnote{here, a domain is an open, non-empty and connected subset of $ \R^d $}. 
In this article we study the eigenvalues of two classes of elliptic operators, namely inhomogeneous membrane operators, sometimes with a potential, 
\begin{equation}
    \label{eq: def inhom membr op}
    L = \tfrac{1}{ \rho } ( -\Delta + V ) 
\end{equation}
and divergence form operators 
\begin{equation}
    \label{eq: def div form op}
    L = -\div A \nabla  
\end{equation}
on the domain $ \Omega $. 
Here, $ \rho $ is a positive density function, $ V $ is a bounded real potential and $ A $ is a symmetric and uniformly elliptic coefficient matrix,  
see Section~\ref{sec: elementary spec theo} for precise definitions and assumptions.

To avoid unnecessary case distinctions, in the preliminaries we work with elliptic operators of the form
\begin{equation}
    \label{eq: gen form of L}
    L = \tfrac{1}{ \rho } ( -\div A \nabla + V ) \, . 
\end{equation}
We are interested in the eigenvalue problem with a Dirichlet boundary condition,  
\begin{equation}
    \label{eq: dirichlet poblem for L}
    \left\{
    \begin{aligned}
        -\div A \nabla \varphi + V \varphi
        &= \lambda \rho \varphi  
        \, \ & \text{ in } \Omega \, , \\
        \varphi &= 0 \, \ 
        & \text{ on } \d \Omega \, ,
    \end{aligned}
    \right.
\end{equation}
and with a Neumann boundary condition, 
\begin{equation}
    \label{eq: neumann problem for L}
    \left\{
    \begin{aligned}
        -\div A \nabla \psi + V \psi
        &= \mu \rho \psi 
        \, \ & \text{ in } \Omega \, , \\
        \d^A_\nu \psi &= 0 \, \ 
        & \text{ on } \d \Omega \, ,  
    \end{aligned}
    \right.
\end{equation}
where $ \d^A_\nu \psi = A \nabla \psi \cdot \nu $ is the normal trace of $ A \nabla \psi $ at the boundary.
We will prove ordering relations comparing the Dirichlet and the Neumann eigenvalues of $ L $, more precisely, upper bounds for the Neumann eigenvalues in terms of the Dirichlet ones. 

The simplest operator $ L $ of this form is the negative Laplacian $ -\Delta $.
It is a standard result that the respective spectra are discrete and given by sequences of eigenvalues
\begin{align*}
    0 < \lambda_1 < \lambda_2 
    \leq \ldots \leq \lambda_n \to \infty \, , \\
    0 = \mu_1 < \mu_2 \leq \ldots 
    \leq \mu_n \to \infty \,  
\end{align*}
repeated according to multiplicity (where $ \lambda_k $ denote the Dirichlet eigenvalues and $ \mu_k $ the Neumann ones), which are described by the variational principles  
\begin{equation}
    \label{eq: Delta rayleigh}
    \lambda_k = \min_{ \substack{  U \subseteq \H^1_0 ( \Omega ) 
    \\ \dim U = k } } \max_{ u \in U } \,
    \frac{ \int_\Omega \abs{ \nabla u }^2 }{ \norm{u}^2 } \ , 
    \qquad 
    \mu_k = \min_{ \substack{ V \subseteq \H^1 ( \Omega ) 
    \\ \dim V = k } } \max_{ v \in V } \,
    \frac{ \int_\Omega \abs{ \nabla v }^2 }{ \norm{v}^2 } \ .
\end{equation}
Here and throughout, we denote by $ \norm{u} = ( \int_\Omega \abs{u}^2 )^{1/2} $ the $ \L^2 $-norm of a square-integrable function $ u : \Omega \to \C $.
For the Laplace operator, there are many results comparing the Dirichlet and the Neumann eigenvalues.
From~\eqref{eq: Delta rayleigh} there follows the trivial inequality $ \mu_k \leq \lambda_k $ for all $ k \in \N $, but in fact stronger inequalities hold true. 
One iconic result is Pólya's inequality $ \mu_2 < \lambda_1 $ in planar domains, which was later generalized to $ \mu_{k+1} < \lambda_k $ for all $ k \in \N $ and in all dimensions $ d \geq 2 $ by Friedlander (non-strict inequality in $ \Cont^1 $-domains, see~\cite{friedlander_91}) and Filonov (strict inequality in Lipschitz domains, see~\cite{filonov04}). 
For convex domains, Levine and Weinberger improved this to $ \mu_{k+d} \leq \lambda_k $, with strict inequality when $ \d \Omega $ is sufficiently regular (see~\cite{levine_weinberger}). 
In dimension $ d=2 $, it was proved recently that this also holds true in simply connected domains, see~\cite{rohleder_ineq_neumann_dirichlet_2023},
and it is conjectured that $ \mu_{k+d} \leq \lambda_k $ holds for all bounded Lipschitz domains in $ \R^d $, see Conjecture~4.2.42 in~\cite{LMP23}. 

This kind of inequalities can give interesting information on the geometry of eigenfunctions. For example, a Neumann eigenfunction $ \psi $ associated to an eigenvalue no larger than $ \lambda_1 $ cannot have an interior nodal domain; note that this result carries over to more general elliptic operators, provided they satisfy a unique continuation property.  
See also~\cite{CMS19} where further motivation for inequalities of the form $ \mu_k \leq \lambda_1 $ is discussed.  
For large values of $ k $, note also the recent preprint~\cite{freitas_24}, where the inequality $ \mu_{ k+ p(k) } \leq \lambda_k $, $ k $ sufficiently large, is proved for a wide class of domains, where $ p(k) = \lfloor c_d k^{ 1- 1/d } \rfloor $ and $ c_d $ is a dimensional constant. 

In this article we investigate to what extent some of the aforementioned inequalities of the form $ \mu_{k+r} \leq \lambda_k $ carry over to more general elliptic operators $ L = \tfrac{1}{ \rho } ( -\div A \nabla +V ) $. 
We denote their Dirichlet and Neumann eigenvalues by $ \lambda^L_k $ resp.\ $ \mu^L_k $, see Section~\ref{sec: elementary spec theo} for precise definitions. 
In the one-dimensional case, it is not hard to see 
that certain elliptic operators do not satisfy Pólya's inequality $ \mu_2^L \leq \lambda^L_1 $. 
For more concrete examples, in~\cite{cheng_strings_12} it is proven that the one-dimensional vibrating string operator $ L = -\tfrac{1}{ \rho } \ddxtwo $ with a concave density $ \rho $ satisfies the reversed Pólya inequality $ \mu^L_2 \geq \lambda^L_1 $, and in~\cite{rohleder_schroed_ev_21} a similar inequality for one-dimensional Schrödinger operators with a symmetric convex potential is shown.

Inspired by the proofs in~\cite{filonov04} and in~\cite{rohleder_schroed_ev_21}, we derive conditions on the density $ \rho $, the potential $ V $ and the elliptic coefficient matrix $ A $ such that the associated operator $ L $ satisfies ordering relations of the type $ \mu^L_{k+r} \leq \lambda^L_k $ for certain $ r $ and $ k=1 $ or all $ k \in \N $. 
In particular, we prove in Theorem~\ref{thm: convex densities} that the weighted Schrödinger operator $ \tfrac{1}{ \rho } (-\Delta + V) $ with convex density and concave potential on a convex domain satisfies the Pólya type inequality $ \mu^L_d \leq \lambda^L_1 $, and in Theorem~\ref{thm: thm frifi for A} that divergence form operators $ -\div A \nabla $ fulfill the Friedlander--Filonov inequality $ \mu^L_{k+1} \leq \lambda^L_k $, $ k \in \N $, if the matrix $ A(x) $ has an eigenpair independent of $ x \in \Omega $.

This article is structured as follows. 
In Section~\ref{sec: elementary spec theo}, we give precise definitions of the weighted elliptic operators under consideration and list some useful elementary spectral properties. 
In Section~\ref{sec: abstract formulation}, we state an abstract version of the method used in both~\cite{levine_weinberger} and~\cite{filonov04} to prove inequalities of the form $ \mu_{k+r} \leq \lambda_k $, $ k \in \N $, which we later apply to our operators. 
In Section~\ref{sec: lw ineq}, we derive conditions on the density $ \rho $ and the potential $ V $ such that the weighted Schrödinger operator $ L = \tfrac{1}{ \rho } ( -\Delta + V ) $ on a convex domain satisfies inequalities of the Levine--Weinberger type $ \mu^L_{k+r} \leq \lambda^L_k $, $ k \in \N $, or of the Pólya-type $ \mu^L_r \leq \lambda^L_1 $ for some $ r \in \N $. 
In Section~\ref{sec: frifi}, we adapt Filonov's proof to derive the inequalitiy $ \mu^L_{k+1} \leq \lambda^L_k $, $ k \in \N $,  for certain operators of the form $ L = -\div A \nabla $ and $ L = -\tfrac{1}{ \rho } \Delta $ on bounded Lipschitz domains. 
In two dimensions, we interpret the result for inhomogeneous membranes in the framework of complex analysis, and compare it to earlier work by Nehari~\cite{nehari_58} and Bandle~\cite{bandle_72}. 
Finally, the Appendix contains the proof of an integration by parts formula on convex domains used in Section~\ref{sec: lw ineq}.

\section{Elliptic operators and elementary spectral theory}
    \label{sec: elementary spec theo}

In this section, we recall a few elementary definitions and properties of elliptic eigenvalue problems.   
We consider differential expressions $ L = \tfrac{1}{ \rho } ( -\div A \nabla + V ) $, where $ \rho : \Omega \to (0, \infty ) $ and $ V : \Omega \to \R $ are two measurable functions called \emph{density} resp.\ \emph{potential}, and $ A : \Omega \to \R^{ d \times d } $ is a symmetric uniformly elliptic coefficient matrix, see Assumption~\ref{ass: reg of A} below.
Throughout the article we work under the following assumptions on $ \rho $, $ V $ and $ A $. 
\begin{assumption}{($ \rho $)}
    \label{ass: reg of rho}
    The density function $ \rho : \Omega \to (0, \infty ) $ is bounded above and away from zero, i.e., $ \rho $, $ \tfrac{1}{ \rho } \in \L^\infty ( \Omega ) $. 
\end{assumption}
\begin{assumption}{($ V $)}
    \label{ass: reg of V}
        The potential $ V : \Omega \to \R $ is bounded, i.e., $ V \in \L^\infty ( \Omega ) $.
\end{assumption}
\begin{assumption}{($ A $)}
    \label{ass: reg of A}
    The coefficient matrix $ A $ is bounded, symmetric and uniformly elliptic.
    In other words, $ A \in \L^\infty ( \Omega , \R^{ d \times d } ) $ and there exists a constant $ c>0 $ such that for each $ x \in \Omega $, $ A(x) \in \R^{ d \times d } $ is a symmetric matrix with 
\begin{equation}
    A(x) \xi \cdot \xi \geq c \abs{ \xi }^2 \ , 
    \quad \xi \in \R^d \, . 
\end{equation}
\end{assumption}
To treat the eigenvalue problems~\eqref{eq: dirichlet poblem for L} and~\eqref{eq: neumann problem for L}, we work in the weighted Hilbert space 
\begin{equation}
    \label{eq: def of L2 rho}
    \L^2_\rho ( \Omega ) 
    = \{ u \in \L^1_{ \loc } ( \Omega ) : 
    \rho^{1/2} u \in \L^2 ( \Omega ) \} \, 
\end{equation}
with inner product 
\begin{equation}
    \scal{u,v}_\rho = \int_\Omega \rho u \bar{v} \,  
\end{equation}
and denote the norm as $ \norm{u}^2_\rho = \scal{u,u}_\rho $.
Since $ \rho $ is bounded above and away from zero, the norms $ \norm{ \ccdot }_\rho $ and $ \norm{ \ccdot } $ are equivalent, so $ \L^2_\rho ( \Omega ) $ equals the usual Lebesgue space $ \L^2 ( \Omega ) $.
Similarly the weighted Sobolev space 
\begin{equation}
    \H^1_\rho ( \Omega ) 
    = \{ u \in \L^2_\rho ( \Omega ) 
    : \nabla u \in \L^2 ( \Omega )^d \} 
    \subseteq \L^2_\rho ( \Omega ) 
\end{equation}
with norm $ \norm{ \ccdot }^2_{ 1, \rho } = \norm{ \ccdot }^2_\rho + \norm{ \nabla \ccdot }^2 $ is equal to the usual Sobolev space $ \H^1 ( \Omega ) $ with equivalent norms. 
We write $ \H^1_{ \rho , 0 } ( \Omega ) $ for the completion of $ \Cont_c^\infty ( \Omega ) $ under the norm $ \norm{ \ccdot }_{1, \rho } $. Alternatively, $ \H^1_{ \rho , 0 } ( \Omega ) $ is just the space $ \H^1_0 ( \Omega ) $ endowed with the norm $ \norm{ \ccdot }_{1, \rho } $.

Consider the symmetric sesquilinear form 
\begin{equation*}
    \mathfrak{a} (u,v)
    = \mathfrak{a}_\text{N} (u,v) 
     = \int_\Omega A \nabla u \nabla \bar{v} + V u \bar{v} 
     \ , \quad u,v \in \H^1_{ \rho } ( \Omega ) \, ,
\end{equation*}
and denote by $ \mathfrak{a}_\text{D} $ its restriction to $ \H^1_{ \rho , 0 } ( \Omega ) \times \H^1_{ \rho , 0 } ( \Omega ) $. 
Then $ \mathfrak{a}_\text{D} $ and $ \mathfrak{a}_N $ are symmetric, bounded below and closed on the Hilbert space $ \L^2_\rho ( \Omega ) $. 
By standard methods, the associated operators $ L_\text{D} $ resp.\ $ L_\text{N} $ are self-adjoint realizations of the differential expression $ L = \tfrac{1}{ \rho } ( -\div A \nabla + V ) $ on $ \L^2_\rho ( \Omega ) $, with domains 
\begin{align}
    \mathcal{D} ( L_\text{D} ) 
    &= \{ u \in \H^1_\rho ( \Omega ) : 
    \tfrac{1}{ \rho } \div A \nabla u \in \L^2_\rho ( \Omega ) , \ 
    u |_{ \d \Omega } = 0 \} \, , \\
    \mathcal{D} ( L_\text{N} ) 
    &= \{ u \in \H^1_\rho ( \Omega ) : 
    \tfrac{1}{ \rho } \div A \nabla u \in \L^2_\rho ( \Omega ) , \ 
    \d^A_\nu u = 0 \} \, . 
\end{align}
(Note that $ \div A \nabla u \in \L^2 ( \Omega ) $ ensures that $ A \nabla u \cdot \nu = \d^A_\nu u $ is well-defined in $ \H^{-1/2} ( \d \Omega ) $, see~\cite[Chapter~III.2]{galdi_navier_stokes} for details.)
Furthermore $ L_\text{D} $ and $ L_\text{N} $ are bounded below (e.g., by the essential infimum of $ V $) and have compact resolvents. 
Therefore their respective spectra are given by sequences 
\begin{align}
    -\infty < \inf_\Omega V < \lambda^L_1 \leq \lambda^L_2 
    \leq \ldots \leq \lambda^L_n \to \infty \, , \\
    -\infty < \inf_\Omega V 
    \leq \mu^L_1 \leq \mu^L_2 \leq \ldots 
    \leq \mu^L_n \to \infty \, ,
\end{align}
repeated according to multiplicity. 
The associated eigenfunctions $ \{ \varphi^L_k \}_{ k \in \N } \subseteq \H^1_{ \rho , 0 } ( \Omega ) $ of $ L_\text{D} $ resp.\ $ \{ \psi^L_k \}_{ k \in \N } \subseteq \H^1_\rho ( \Omega ) $ of $ L_\text{N} $ can be chosen real-valued and such that they form two orthonormal bases of $ \L^2_\rho ( \Omega ) $. 
Moreover, for $ k \in \N  $, $ l \in \N_0 $, the eigenvalues are described by the minimax of Rayleigh quotients 
\begin{equation}
    \label{eq: variational principle for L}
    \begin{aligned}
        \lambda^L_{k+l} &= \min_{ 
        \substack{ U \subseteq \H^1_{ \rho , 0 } ( \Omega ) \\
        \dim U = k , \ U \perp_\rho \, \varphi^L_1 , \ldots , \varphi^L_l } } 
        \max_{ u \in U } \,
        \frac{ \int_\Omega A \nabla u \cdot \nabla \bar{u} 
        + V \abs{u}^2 }
        { \norm{u}_\rho^2 } \ , \\
        \mu^L_{k+l} &= \min_{ 
        \substack{ U \subseteq \H^1_\rho ( \Omega ) \\
        \dim U = k , \ U \perp_\rho \, \psi^L_1 , \ldots , \psi^L_l } } 
        \max_{ u \in U } \,
        \frac{ \int_\Omega A \nabla u \cdot \nabla \bar{u} 
        + V \abs{u}^2 }
        { \norm{u}_\rho^2 } \ .
    \end{aligned}
\end{equation}
(Here we use the notation $ U \perp_\rho \varphi $ as shorthand for $ \scal{ u , \varphi }_\rho = 0 $ for all $ u \in U $.)
Furthermore, for $ k=1 $ any minimizing subspace is spanned by an eigenfunction corresponding to $ \lambda^L_{l+1} $ resp.\ to $ \mu^L_{l+1} $.

We will need the following result on elliptic boundary regularity for $ L $, which follows from~\cite[Proposition~4.8]{AGMT10} (since convex domains are quasi-convex, see formula~(4.16) in~\cite{AGMT10}; note also that, although the authors consider only Schrödinger operators, the result for $ \tfrac{1}{ \rho } ( -\Delta + V ) $ follows a posteriori since $ \rho $ is bounded above and below). 
For $ \Cont^2 $-domains, see also~\cite[Chapter~6.3]{evans_pdes_98}.
\begin{proposition}
    \label{prop: bdy reg of ef}
    Assume that $ \Omega $ has a $ \Cont^2 $-boundary or is convex, and suppose that $ A \equiv \operatorname{Id} $. Then any Dirichlet eigenfunction $ \varphi \in \H^1_{ \rho , 0 } ( \Omega ) $ of $ L = \tfrac{1}{ \rho } ( -\Delta + V ) $ lies in $ \H^2 ( \Omega ) $. 
\end{proposition}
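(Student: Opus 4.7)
The plan is to rewrite the eigenvalue equation as an inhomogeneous Poisson problem and then invoke $ \H^2 $-regularity for the Dirichlet Laplacian. If $ \varphi \in \H^1_{ \rho , 0 } ( \Omega ) $ is a Dirichlet eigenfunction, i.e.\ $ \tfrac{1}{ \rho } ( -\Delta + V ) \varphi = \lambda \varphi $ in $ \L^2_\rho ( \Omega ) $, then setting $ f = \lambda \rho \varphi - V \varphi $ we have $ f \in \L^2 ( \Omega ) $ by Assumptions~\ref{ass: reg of rho} and~\ref{ass: reg of V}, and $ \varphi $ is a weak solution of
\begin{equation*}
    -\Delta \varphi = f \ \text{ in } \Omega \, , \qquad \varphi = 0 \ \text{ on } \d \Omega \, .
\end{equation*}
So the task is reduced to showing $ \varphi \in \H^2 ( \Omega ) $ for this standard Poisson problem with $ \L^2 $-data.

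For the $ \Cont^2 $-case I would follow the classical argument (Evans, Chapter~6.3): translating difference quotients tangential to $ \d \Omega $ yields control of the mixed tangential derivatives of $ \nabla \varphi $ in $ \L^2 $, and then the equation $ -\Delta \varphi = f $ allows one to solve for the remaining normal-normal derivative, giving $ \varphi \in \H^2 ( \Omega ) $ together with an estimate $ \norm{ \varphi }_{ \H^2 } \les \norm{ f }_{ \L^2 } + \norm{ \varphi }_{ \L^2 } $.

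For a convex (possibly non-smooth) $ \Omega $ with $ A \equiv \id $, I would argue by approximation. Exhaust $ \Omega $ from inside by a sequence of smooth convex subdomains $ \Omega_n \subseteq \Omega $ with $ \Omega_n \to \Omega $, and on each $ \Omega_n $ let $ \varphi_n \in \H^2 ( \Omega_n ) \cap \H^1_0 ( \Omega_n ) $ solve $ -\Delta \varphi_n = f $. The crucial ingredient is Grisvard's identity, which for smooth $ \Omega_n $ and functions vanishing on $ \d \Omega_n $ gives
\begin{equation*}
    \int_{ \Omega_n } \abs{ D^2 \varphi_n }^2
    = \int_{ \Omega_n } \abs{ \Delta \varphi_n }^2
    - \int_{ \d \Omega_n } \mathrm{II} ( \nabla \varphi_n , \nabla \varphi_n ) \, ,
\end{equation*}
where $ \mathrm{II} $ is the second fundamental form of $ \d \Omega_n $; convexity makes $ \mathrm{II} \geq 0 $, so the boundary term has a favorable sign and we obtain $ \norm{ D^2 \varphi_n }_{ \L^2 } \leq \norm{ f }_{ \L^2 } $ uniformly in $ n $. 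A standard weak-compactness / uniqueness argument then identifies the weak limit with $ \varphi $ and places it in $ \H^2 ( \Omega ) $.

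The main obstacle is the convex case: convex domains need not have $ \Cont^2 $-boundary, so one cannot apply the classical regularity theorem directly, and the approximation argument requires both a suitable exhaustion by smooth convex domains and a uniform $ \H^2 $-bound robust under this approximation, which is exactly why the sign of the second fundamental form on a convex boundary is essential. For the sake of brevity one may alternatively invoke~\cite[Proposition~4.8]{AGMT10}, which provides $ \H^2 $-regularity of the Dirichlet problem for $ -\Delta + V $ on quasi-convex domains, a class that includes both $ \Cont^2 $- and convex domains (see~(4.16) in~\cite{AGMT10}).
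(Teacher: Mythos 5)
Your proposal is correct and follows essentially the same route as the paper: reduce the eigenvalue equation to a Dirichlet Poisson/Schr\"odinger problem with $\L^2$ right-hand side (using that $\rho$ and $V$ are bounded) and then invoke standard $\H^2$-regularity, via \cite[Chapter~6.3]{evans_pdes_98} for $\Cont^2$-boundaries and \cite[Proposition~4.8]{AGMT10} for the convex case, which is exactly the paper's citation. Your sketch of the convex case by exhaustion with smooth convex subdomains and the Grisvard-type identity (nonnegative boundary term by convexity, cf.~\cite{grisvard}) is just the standard argument underlying that cited result, so it adds detail rather than a genuinely different method.
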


Under adequate regularity assumptions, second order elliptic operators satisfy a unique continuation property. 
Note that the unique continuation property may fail if the coefficient matrix is only Hölder-regular, see for example~\cite{mandache_counterx_97}, so for simplicity we assume that $ A $ is smooth, say $ A \in \Cont^\infty ( \overline{ \Omega } , \R^{ d \times d } ) $, where for $ l \in \N \cap \{ \infty \} $ we denote 
\begin{equation}
    \label{eq: def of C1 on Omega}
    \Cont^l ( \overline{ \Omega } ) 
    = \Cont^l ( \R^d ) |_{ \overline{ \Omega } } 
    \, . 
\end{equation}
For more precise statements and proofs, we refer to the~\cite{aronszajn} (for $ \Cont^{2,1} $-coefficient matrices), and to~\cite{wolff_ucp_93} for even more general results.

\begin{proposition}
    \label{prop: ef uniq cont}
    Assume $ A $ is smooth on $ \overline{ \Omega } $ and let $ \varphi \in \H^1_{ \rho , 0 } ( \Omega ) $ be a Dirichlet eigenfunction of the operator $ L= \tfrac{1}{ \rho } ( -\div A \nabla + V ) $. 
    If $ \varphi $ vanishes identically on a non-empty open subset of $ \Omega $, then $ \varphi \equiv 0 $ on $ \Omega $. 
\end{proposition}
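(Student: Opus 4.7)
The plan is to reduce the statement to a standard weak unique continuation theorem for uniformly elliptic second order equations with smooth leading coefficient and bounded lower-order terms, such as the one in~\cite{wolff_ucp_93} (or, more classically, Aronszajn's theorem for Lipschitz principal part).

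First I would rewrite the eigenvalue equation in non-divergence form. By interior elliptic regularity applied to $-\div A \nabla \varphi = (\lambda \rho - V) \varphi$ with $A$ smooth and the right-hand side in $\L^\infty(\Omega)$, one obtains $\varphi \in \H^2_{\loc}(\Omega)$. Expanding the divergence pointwise then gives
\begin{equation*}
    -\tr(A \nabla^2 \varphi) = b \cdot \nabla \varphi + c \, \varphi
    \quad \text{a.e.\ in } \Omega,
\end{equation*}
where $b = \div A \in \Cont^\infty(\overline{\Omega}, \R^d)$ and $c = \lambda \rho - V \in \L^\infty(\Omega)$. The principal part is uniformly elliptic with smooth coefficients, and the perturbations are of order at most one with bounded coefficients — precisely the structural hypothesis under which the cited unique continuation results apply. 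Invoking the weak unique continuation property for this equation, the vanishing of $\varphi$ on a non-empty open subset of $\Omega$ forces $\varphi$ to vanish on the connected component of $\Omega$ containing that subset; since $\Omega$ is by assumption a domain, hence connected, this yields $\varphi \equiv 0$ throughout $\Omega$.

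I do not anticipate a serious obstacle: the heart of the argument is an appeal to the cited unique continuation theorem, and the only manipulations required are the reduction to non-divergence form and the verification that the interior regularity hypotheses hold. What is essential, however, is the smoothness of $A$: as highlighted in the paragraph preceding the proposition, weak unique continuation can fail for merely Hölder-regular leading coefficients in dimension $d \geq 3$ (see~\cite{mandache_counterx_97}), so the assumption $A \in \Cont^\infty(\overline{\Omega}, \R^{d \times d})$ cannot be relaxed without invoking more delicate results.
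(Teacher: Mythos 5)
Your proposal is correct and takes essentially the same route as the paper: the paper gives no separate argument for this proposition but treats it as a standard consequence of the unique continuation literature (citing \cite{wolff_ucp_93}), and your reduction to non-divergence form via interior $\H^2_{\loc}$ regularity, followed by an appeal to weak unique continuation for uniformly elliptic equations with smooth principal part and bounded lower-order coefficients plus the connectedness of $\Omega$, is precisely the routine bridge that citation presupposes.
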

    
We will use the following consequences of the unique continuation property. Again, these results are well-known for the standard Laplacian, and the proofs carry over without difficulty. 
\begin{corollary}
    \label{cor: no ef dir neum}
    Assume $ A \in \Cont^\infty ( \overline{ \Omega } , \R^{ d \times d } ) $. 
    If $ \varphi \in \H^1_{ \rho , 0 } ( \Omega ) $ is a Dirichlet eigenfunction of $ L = \tfrac{1}{ \rho } ( -\div A \nabla + V ) $ and $ \d^A_\nu \varphi = 0 $ on a relatively open subset $ \omega \subseteq \d \Omega $, then $ \varphi \equiv 0 $ in $ \Omega $. 
    In particular, $ L $ does not have any eigenfunction which satisfies both Dirichlet and Neumann conditions. 
\end{corollary}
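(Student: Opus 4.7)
The plan is the classical extension-by-zero argument combined with interior unique continuation. The Dirichlet condition makes the zero extension of $\varphi$ across $\omega$ into $\H^1$-regular, and the vanishing conormal derivative on $\omega$ promotes this extension to a weak eigenfunction of an elliptic operator on a strictly larger domain. Because the extension vanishes on an open set outside $\Omega$, Proposition~\ref{prop: ef uniq cont} applied on this larger domain forces it, and hence $\varphi$, to vanish identically.

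Concretely, pick $\xo \in \omega$ and a ball $B \subseteq \R^d$ centred at $\xo$ small enough that $B \cap \d\Omega \subseteq \omega$, and set $\Omega' = \Omega \cup B$; this is a bounded connected domain strictly containing $\Omega$. Since $A \in \Cont^\infty(\overline{\Omega}) = \Cont^\infty(\R^d)|_{\overline{\Omega}}$ by definition, the matrix $A$ already has a smooth extension to $\R^d$, while $V$ and $\rho$ can be extended to $\Omega'$ preserving Assumptions~\ref{ass: reg of V} and~\ref{ass: reg of rho} (the values outside $\Omega$ are immaterial). Let $\tilde{\varphi} = \varphi$ on $\Omega$ and $\tilde{\varphi} = 0$ on $\Omega' \setminus \Omega$. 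The vanishing Dirichlet trace of $\varphi$ gives $\tilde{\varphi} \in \H^1_{\rho , 0}(\Omega')$ with $\nabla\tilde{\varphi} = \one_\Omega \nabla\varphi$.

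The main step is to verify the weak eigenvalue equation
\begin{equation*}
    \int_{\Omega'} A \nabla \tilde{\varphi} \cdot \nabla \bar{\eta} + V \tilde{\varphi} \bar{\eta}
    = \lambda \int_{\Omega'} \rho \tilde{\varphi} \bar{\eta} \, , \quad \eta \in \Cont^\infty_c(\Omega') \, .
\end{equation*}
Both sides reduce to integrals over $\Omega$, and integrating the gradient term by parts yields a boundary contribution equal to the duality pairing of $\d^A_\nu\varphi \in \H^{-1/2}(\d\Omega)$ (well-defined since $\div A\nabla\varphi \in \L^2(\Omega)$, as recalled after~\eqref{eq: neumann problem for L}) with $\eta|_{\d\Omega}$. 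Since $\supp \eta \cap \d\Omega \subseteq B \cap \d\Omega \subseteq \omega$, and $\d^A_\nu\varphi$ vanishes on $\omega$ by hypothesis, this term drops out, leaving precisely the weak eigenvalue equation already satisfied by $\varphi$. Proposition~\ref{prop: ef uniq cont}, applied on $\Omega'$ to $\tilde{\varphi}$ (which is now a Dirichlet eigenfunction of the extended operator vanishing on the non-empty open set $B \setminus \overline{\Omega}$), then gives $\tilde{\varphi} \equiv 0$, hence $\varphi \equiv 0$. The \emph{in particular} clause is the special case $\omega = \d\Omega$. The main technical point is justifying the boundary pairing rigorously given only the $\H^{-1/2}$-regularity of the conormal derivative, for which the localisation $\supp \eta \cap \d\Omega \subseteq \omega$ is essential.
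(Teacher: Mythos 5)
Your proof is correct and follows essentially the same route as the paper: the paper simply defers to Lemma~3.1 of~\cite{rohleder_robin_20}, which is precisely this extension-by-zero across a portion of $\omega$ to a larger domain $\Omega'$, verification of the weak eigenvalue equation using the vanishing conormal derivative, and an appeal to the unique continuation property (Proposition~\ref{prop: ef uniq cont}). Your write-up just makes explicit the details the paper leaves to the citation, including the smooth extension of $A$ guaranteed by~\eqref{eq: def of C1 on Omega}.
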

\begin{proof}
    The proof is essentially the same as for Lemma~3.1 in~\cite{rohleder_robin_20}, since the density $ \rho $,  the potential $ V $ and the matrix $ A $ can be extended to be admissible on a slightly larger domain $ \Omega' \supseteq \Omega $ with $ \Omega' \cap \d \Omega \subseteq \omega $ ($ A $ admitting a smooth extension by the definition in~\eqref{eq: def of C1 on Omega}).
\end{proof}
\begin{corollary}
    \label{cor: dim trial deriv}
    Assume $ \Omega $ has a $ \Cont^2 $-boundary or is convex. 
    If $ \varphi \in \H^1_{ \rho , 0 } ( \Omega ) $ is a real-valued Dirichlet eigenfunction of the elliptic operator $ L = \tfrac{1}{ \rho } ( -\Delta + V ) $, then the subspace $ \{ b \cdot \nabla \varphi : b \in \C^d \} $ has dimension $ d $ and has trivial intersection with $ \H^1_0 ( \Omega ) = \H^1_{ \rho , 0 } ( \Omega ) $.
\end{corollary}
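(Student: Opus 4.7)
The plan is to treat the two assertions separately, using in both cases the boundary regularity from Proposition~\ref{prop: bdy reg of ef} (which applies since $A = \id$ and $\Omega$ is either $\Cont^2$ or convex, giving $\varphi \in \H^2(\Omega)$) together with the unique continuation statements in Proposition~\ref{prop: ef uniq cont} and Corollary~\ref{cor: no ef dir neum}.

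For the dimension claim I would argue by contradiction: suppose $b \cdot \nabla\varphi \equiv 0$ for some $b \in \C^d \setminus \{0\}$. Writing $b = b_1 + \i b_2$ with $b_j \in \R^d$ and using that $\varphi$ is real-valued splits this into $b_1 \cdot \nabla\varphi = b_2 \cdot \nabla\varphi = 0$, so it suffices to rule out real $b \neq 0$. Extending $\varphi$ by zero to $\tilde\varphi \in \H^1(\R^d)$ (legitimate because $\varphi \in \H^1_0(\Omega)$) gives $b \cdot \nabla \tilde\varphi \equiv 0$ on $\R^d$, so $\tilde\varphi$ is weakly independent of the coordinate in direction $b$. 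Since $\Omega$ is bounded, $\tilde\varphi$ vanishes along every line parallel to $b$ (as it vanishes outside a compact slab), hence $\varphi \equiv 0$, contradicting that $\varphi$ is an eigenfunction. This gives $\dim\{b \cdot \nabla\varphi : b \in \C^d\} = d$.

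For the triviality of the intersection with $\H^1_0(\Omega)$, suppose $b \cdot \nabla\varphi \in \H^1_0(\Omega)$ for some $b \in \C^d$. Real-valuedness of $\varphi$ again reduces matters to ruling out real $b \neq 0$. Since $\varphi \in \H^2(\Omega) \cap \H^1_0(\Omega)$, the tangential components of $\nabla\varphi$ vanish on $\partial\Omega$ and the boundary trace of $\nabla\varphi$ equals $(\d_\nu\varphi)\nu$ a.e.; combined with the vanishing boundary trace of $b \cdot \nabla\varphi$, this yields $(b \cdot \nu)\d_\nu\varphi = 0$ a.e.\ on $\partial\Omega$. It then remains to exhibit a relatively open subset $\omega \subseteq \partial\Omega$ on which $b \cdot \nu$ is a.e.\ nonzero, for then $\d_\nu\varphi = 0$ on $\omega$ and Corollary~\ref{cor: no ef dir neum} forces $\varphi \equiv 0$, the desired contradiction. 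I would take $\omega$ to be a relative neighbourhood of a boundary point where $x \cdot b$ is maximised: in the $\Cont^2$ case, continuity of $\nu$ and $\nu(x_0) = b/\abs{b}$ make this immediate, while in the convex case, parametrising $\partial\Omega$ locally at such an extreme point as the graph of a convex function over its supporting hyperplane (whose normal is $b/\abs{b}$) shows that $b \cdot \nu > 0$ a.e.\ in a relative neighbourhood of $x_0$.

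I expect the main obstacle to be this last geometric lemma in the convex, non-smooth setting, where $\nu$ is only defined almost everywhere and may fail to be continuous; the local graph-of-a-convex-function description near an extreme point in direction $b$ is what makes the a.e.\ positivity of $b \cdot \nu$ survive. Everything else reduces to standard manipulations with traces of $\H^2$-functions and a direct application of the unique continuation results already established.
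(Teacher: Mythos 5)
Your proposal is correct and follows essentially the same route as the paper: the dimension count rests on the observation that a function constant in one direction and vanishing at the boundary of a bounded domain must vanish identically, and the intersection claim uses $\varphi\in\H^2(\Omega)\cap\H^1_0(\Omega)$ (Proposition~\ref{prop: bdy reg of ef}), the normal trace $\nabla\varphi=(\d_\nu\varphi)\nu$ on $\d\Omega$ (Lemma~\ref{lem: grad phi normal}), a relatively open set $\omega$ with $b\cdot\nu\neq 0$, and Corollary~\ref{cor: no ef dir neum}; you actually give more detail than the paper, which merely asserts the existence of such an $\omega$. The one inaccuracy is in your convex case: near the maximizer $x_0$ of $x\cdot b$ the boundary need not be a graph over the supporting hyperplane, and $b\cdot\nu$ need not be nonzero a.e.\ on a full relative neighbourhood of $x_0$ --- for the half-disc $\{x_1>0,\ \abs{x}<1\}$ with $b=e_2$ the maximizer is $(0,1)$, and every relative neighbourhood contains a piece of the flat side, on which $b\cdot\nu=0$. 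Since you only need \emph{some} relatively open $\omega$, this is easily repaired: take $\omega$ to be the ``upper graph'' $\{(x',g(x'))\}$ of the concave function $g(x')=\sup\{t:(x',t)\in\overline{\Omega}\}$ over interior points $x'$ of the projection of $\Omega$ near $x_0$; this set is relatively open in $\d\Omega$ and there $b\cdot\nu=\abs{b}\,(1+\abs{\nabla g}^2)^{-1/2}>0$ a.e., which is all the argument requires.
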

\begin{proof}
    The proof follows the reasoning from~\cite[Theorem~4.1]{rohleder_lotoreichik}. 
    Observe first that, since $ \Omega $ is bounded, if a function $ u $ is constant in some direction and vanishes at the boundary, then $ u $ must vanish identically in $ \Omega $. This implies that $ \d_1 \varphi , \ldots , \d_d \varphi $ span a $ d $-dimensional space. 
    Indeed, let $ b \in \C^d \setminus \{ 0 \} $, and without loss of generality suppose $ \Re b \neq 0 $. If $ b \cdot \nabla \varphi = 0 $ in $ \Omega $, then, since $ \varphi $ is real, $ \Re ( b \cdot \nabla \varphi ) = \d_{ \Re b } \varphi $ is identically zero, contradicting the observation above. 

    Now suppose $ b \cdot \nabla \varphi $ lies in $ \H^1_0 ( \Omega ) $ for some $ b \in \C^d $. 
    There is an open boundary subset $ \omega \subseteq \d \Omega $ such that $ \nu (x) \cdot \Re b \neq 0 $ for $ x \in \omega $. 
    Since $ \varphi $ vanishes on $ \d \Omega $, the tangential part of $ \nabla \varphi $ vanishes on $ \omega $ (see Lemma~\ref{lem: grad phi normal}). Since $ \Re b $ is not tangential on $ \omega $, together with $ \d_{ \Re b } \varphi = 0 $ in $ \Omega $ it follows that $ \d_\nu u = 0 $ on $ \Omega $.
    But this contradicts Corollary~\ref{cor: no ef dir neum}. 
\end{proof}

\section{An abstract framework for eigenvalue inequalities}
    \label{sec: abstract formulation}

To prove inequalities of the type $ \mu_{k+r} \leq \lambda_k $, the proofs appearing in~\cite{levine_weinberger} and~\cite{filonov04} rely on a very similar argument: finding adequate trial functions to test, along with Dirichlet eigenfunctions, the variational principle for Neumann eigenvalues in~\eqref{eq: variational principle for L}. In this section we formulate this method in an abstract setting for simple reference. 

Let $ H $ be a Hilbert space and $ H_0 , H_1 \subseteq H $ be two subspaces with $ H_0 \subseteq H_1 $. 
Let $ \mathfrak{a} : H_1 \times H_1 \to \C $ be a symmetric and closed sesquilinear form bounded from below, and assume that also the restriction $ \mathfrak{a}_0 = \mathfrak{a} |_{ H_0 \times H_0 } $ is closed. 
Denote by $ L_0 $ resp.\ $ L_1 $ the self-adjoint operators associated to $ \mathfrak{a}_0 $ resp.\ $ \mathfrak{a}_1 $, and suppose that $ L_0 $ and $ L_1 $ have compact resolvents. 
Denote the eigenvalues (counting multiplicity) of $ L_0 $ resp. of $ L_1 $ by 
\begin{align*}
    \lambda_1 \leq \lambda_2 
    \leq \ldots \leq \lambda_n \to \infty \, , \\
    \mu_1 \leq \mu_2 \leq \ldots 
    \leq \mu_n \to \infty \, . 
\end{align*}

\begin{proposition}
    \label{prop: abstract filonov}
    Let $ \lambda \in \R $ and $ k \in \N $ with $ \lambda_k \leq \lambda $, and let $ r \in \N $.
    \begin{enumerate}[(i)]
        \item 
        Suppose there is an $ r $-dimensional subspace $ W \subseteq H_1 $ with $ W \cap H_0 = \{ 0 \} $ such that all $ w \in W $ satisfy 
        \begin{equation}
            \label{eq: def abstract weak sol ev eq}
            \mathfrak{a} (w,u) 
            = \lambda \scal{w,u}_H 
            \quad \text{ for all } u \in H_0 \, 
        \end{equation}
        as well as the estimate 
        \begin{equation}
            \label{eq: form bound for w} 
            \mathfrak{a}(w,w) \leq \lambda \norm{w}^2_H \, . 
        \end{equation}
        Then the eigenvalue inequality $ \mu_{k+r} \leq \lambda $ holds. 
        \item 
        If in addition the inequality~\eqref{eq: form bound for w} is strict for all non-zero $ w \in W $ and if $ L_0 $ and $ L_1 $ do not have a common eigenfunction, then $ \mu_{k+r} < \lambda $. 
        \item 
        Suppose $ r = 1 $ and that $ L_0 $ and $ L_1 $ do not have a common eigenfunction.
        If there exist infinitely many linearly independent elements $ w \in H_1 \setminus H_0 $ satisfying~\eqref{eq: def abstract weak sol ev eq} and~\eqref{eq: form bound for w}, then the inequality $ \mu_{k+1} < \lambda $ is strict.  
    \end{enumerate}
\end{proposition}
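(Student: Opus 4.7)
The strategy is to feed a carefully chosen trial subspace into the min--max characterization of $\mu_{k+r}$. Let $\varphi_1, \ldots, \varphi_k \in H_0$ be eigenfunctions of $L_0$ corresponding to $\lambda_1, \ldots, \lambda_k$, normalized so that $\{\varphi_j\}$ is orthonormal in $H$, and set $U_k := \spann\{\varphi_1, \ldots, \varphi_k\}$. Since $U_k \subseteq H_0$ and $W \cap H_0 = \{0\}$, the sum $V := U_k + W$ is direct of dimension $k + r$, and by min--max it suffices to verify $\mathfrak{a}(v, v) \leq \lambda \|v\|_H^2$ on $V$. For $v = u + w$ with $u \in U_k$ and $w \in W$, the three pieces of
\begin{equation*}
    \mathfrak{a}(u + w, u + w) = \mathfrak{a}(u, u) + 2 \Re \mathfrak{a}(u, w) + \mathfrak{a}(w, w)
\end{equation*}
are estimated respectively by the spectral decomposition on $U_k$, giving $\mathfrak{a}(u, u) \leq \lambda_k \|u\|_H^2 \leq \lambda \|u\|_H^2$; by the hypothesis~\eqref{eq: def abstract weak sol ev eq} combined with conjugate symmetry, giving $\mathfrak{a}(u, w) = \lambda \scal{u, w}_H$; and by~\eqref{eq: form bound for w}. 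Summing produces $\mathfrak{a}(v, v) \leq \lambda \|v\|_H^2$ and proves Part~(i).

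For Part~(ii) I would argue by contradiction, assuming $\mu_{k+r} = \lambda$. Then $\lambda$ is an eigenvalue of $L_1$ with finite-dimensional eigenspace $F$ of some dimension $s \geq 1$. Consider the augmented subspace $V'' := V + F$. Since every $f \in F$ satisfies the full $L_1$-eigenvalue equation $\mathfrak{a}(f, \tilde v) = \lambda \scal{f, \tilde v}_H$ for all $\tilde v \in H_1$, the cross-terms between $V$ and $F$ cancel in exactly the same way, so $\mathfrak{a}(v'', v'') \leq \lambda \|v''\|_H^2$ on $V''$. The crux is that the sum $V + F$ is direct. Indeed, any non-zero $v = u + w \in V \cap F$ saturates the Part~(i) inequality; the strict version of~\eqref{eq: form bound for w} then forces $w = 0$, so $v = u \in U_k \cap F \subseteq H_0 \cap F$. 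But any element of $H_0 \cap F$ is automatically an eigenfunction of $L_0$, because restricting the weak $L_1$-eigenvalue equation to test functions in $H_0$ is precisely the weak $L_0$-eigenvalue equation, contradicting the no-common-eigenfunction hypothesis. Hence $\dim V'' = k + r + s$, so min--max yields $\mu_{k+r+s} \leq \lambda$, whereas the multiplicity of $\lambda$ in the Neumann spectrum is exactly $s$ and $\mu_{k+r} = \lambda$ together force $\mu_{k+r+s} > \lambda$, the desired contradiction.

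Part~(iii) is a variant of the same idea adapted to $r = 1$ and non-strict form bound. Assuming again $\mu_{k+1} = \lambda$, let $F$ be the $\lambda$-eigenspace of $L_1$; this is finite-dimensional, and $U_k + F$ is finite-dimensional as well. By hypothesis, there exists a $w$ satisfying~\eqref{eq: def abstract weak sol ev eq} and~\eqref{eq: form bound for w} with $w \in H_1 \setminus H_0$ and $w \notin U_k + F$, since the set of admissible $w$ contains infinitely many linearly independent elements. The direct-sum analysis still goes through: any $v = u + c w \in V \cap F$ with $c \neq 0$ would force $w \in U_k + F$ (contradicting the choice), and $c = 0$ reduces to the common-eigenfunction contradiction as before. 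The main obstacle in the whole proof is precisely this directness of $V + F$, which has to be extracted separately from the strict form bound in~(ii) and from the abundance of trial functions in~(iii); the role of the no-common-eigenfunction assumption is to ensure that the Rayleigh quotient equality case never occurs on $H_0 \cap F$.
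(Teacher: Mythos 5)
Your proposal is correct. Part~(i) is the paper's argument verbatim: test the min--max for $\mu_{k+r}$ on the direct sum of $\spann\{\varphi_1,\dots,\varphi_k\}$ and $W$, estimating the diagonal terms by the spectral bound and~\eqref{eq: form bound for w} and evaluating the cross term via~\eqref{eq: def abstract weak sol ev eq}. Part~(iii) also matches the paper in substance: both exploit the finite dimensionality of $\ker(L_1-\lambda)$ together with the abundance of admissible $w$ to build a larger trial space and then count; the paper phrases this directly as a count of eigenvalues strictly below $\lambda$, while you run it as a contradiction from $\mu_{k+1}=\lambda$, which is the same mechanism. Where you genuinely diverge is Part~(ii): the paper assumes $\mu_{k+r}=\lambda$, picks a nonzero $z=u+w$ in the $(k+r)$-dimensional trial space orthogonal to $\psi_1,\dots,\psi_{k+r-1}$, and squeezes its Rayleigh quotient to conclude that $z$ is an $L_1$-eigenfunction with $w=0$, hence a common eigenfunction --- a short equality-case analysis with no multiplicity bookkeeping. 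You instead adjoin the full $\lambda$-eigenspace $F$ of $L_1$, prove directness of the sum from the strict form bound plus the no-common-eigenfunction hypothesis, and derive the contradiction $\mu_{k+r+s}\leq\lambda<\mu_{k+r+s}$ from the multiplicity count. Your route has the merit of treating~(ii) and~(iii) by one uniform counting argument and of making explicit the key observation (used only implicitly in the paper, both in~(ii) and in the claim $U\cap\ker(L_1-\lambda)=\{0\}$ in~(iii)) that an $L_1$-eigenfunction lying in $H_0$ is automatically an $L_0$-eigenfunction because restricting the weak equation to $H_0$ gives the weak $L_0$-equation; the paper's version of~(ii) is shorter and avoids the index arithmetic. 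Both counting steps you perform (that $\mu_{k+r}=\lambda$ with multiplicity $s$ forces $\mu_{k+r+s}>\lambda$, and the analogous statement in~(iii)) are correct.
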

To obtain estimates between the Dirichlet and Neumann eigenvalues of elliptic operators of the form $ L= \tfrac{1}{ \rho } ( -\div A \nabla + V ) $, we will set $ H = \L^2_\rho ( \Omega ) $, $ H_0 = \H^1_{ \rho , 0 } ( \Omega ) $ and $ H_1 = \H^1_\rho ( \Omega ) $, and usually choose $ \lambda = \lambda^L_k $. 
Then, one looks for (subspaces of) functions $ w \in \H^1_{ \rho } ( \Omega ) $ solving~\eqref{eq: def abstract weak sol ev eq} and satisfying the estimate~\eqref{eq: form bound for w}. 
In this setting, 
observe that~\eqref{eq: def abstract weak sol ev eq} translates to $ w $ being a solution of the differential equation $ Lw = \lambda w $ in $ \Omega $. 
\begin{proof}[Proof of Proposition~\ref{prop: abstract filonov}]
    Fix $ k \in \N $ and let $ W \subseteq H_1 $ be as in~\emph{(i)}. Consider the subspace
    \begin{equation}
        \label{eq: def of u spanned by ef}
        U = \spann \{ \varphi_1 , \ldots , 
        \varphi_k \} \subseteq H_0 \, , 
    \end{equation} 
    where $ \{ \varphi_l \}_{ l \in \N } $ denotes an orthonormal basis of eigenvectors for $ L_0 $, and set $ E = U+W \subseteq H_1 $.
    Since $ W \cap H_0 = \{ 0 \}$, the dimension of $ E $ is $ k+r $. 
    For any function $ u+w \in E $, where $ u \in U $, $ w \in W $, we have 
    \begin{equation}
        \label{eq: u+w to bound}
        \mathfrak{a} ( u+w , u+w ) 
        = \mathfrak{a} (u,u) + \mathfrak{a} (w,w) 
        + 2 \Re \mathfrak{a} (u,w) \, . 
    \end{equation}
    Writing $ u = \sum a_i \varphi_i $, by orthogonality of the eigenfunctions we have the simple bound 
    \begin{align}
        \mathfrak{a} (u,u) 
        = \sum_{i,j=1}^k a_i \bar{a}_j 
        \mathfrak{a} ( \varphi_i , \varphi_j ) 
        = \sum_{i,j=1}^k a_i \bar{a}_j \lambda_i 
        \scal{ \varphi_i , \varphi_j } 
        \leq \lambda \sum_{i=1}^k 
        \abs{ a_i }^2 \norm{ \varphi_i }^2 
        = \lambda \norm{u}^2_H \, .
    \end{align}
    
    Since $ u \in H_0 $, the assumption~\eqref{eq: def abstract weak sol ev eq} gives that the third term on the right hand side of~\eqref{eq: u+w to bound} is equal to $ 2 \lambda \Re \scal{ u , w }_H $. 
    Finally, by assumption~\eqref{eq: form bound for w} the second term on the right hand side of~\eqref{eq: u+w to bound} is bounded by $ \lambda \norm{w}^2_H $. 
    Summing up, we have 
    \begin{equation}
        \mathfrak{a} ( u+w , u+w ) 
        \leq \lambda \left( \norm{u}^2_H 
        + \norm{ w }^2_H 
        + 2 \Re \scal{ u , w }_H \right)
        = \lambda \norm{ u+w }^2_H \, .
    \end{equation}
    Because the trial subspace $ U \subseteq H_1 $ has dimension $ k+r $, the variational principle for the eigenvalues of $ L_1 $ gives the inequality $ \mu_{k+r} \leq \lambda $. 

    To prove~\emph{(ii)}, suppose for a contradiction that $ \mu_{k+r} = \lambda $. Since $ \dim E = k+r $, we find a non-zero $ z = u+w \in E $ (with $ u \in U $, $ w \in W $) orthogonal to the eigenfunctions $ \psi_1 , \ldots , \psi_{k+r-1} $ of $ L_1 $. 
    The variational principle for the eigenvalues of $ L_1 $ and the fact that $ z \in E $ give  
    \begin{equation}
        \label{eq: all 3 must be equal}
        \mu_{k+r} \norm{z}^2_H 
        \leq \mathfrak{a}_1 (z,z) 
        \leq \lambda \norm{z}^2_H \, . 
    \end{equation}
    Since $ z \neq 0 $ all three terms must be equal. 
    The first equality then implies that $ z $ is an eigenfunction of $ L_1 $, while equality in the second inequality yields $ w = 0 $. 
    But this means that $ z = u \in H_0 $, so $ z $ is also an eigenfunction for $ L_0 $, contradicting the assumption.     

    Finally, let us modify the above argument to prove~\emph{(iii)}. 
    Consider the subspace $ V = \ker ( L_1 - \lambda ) \subseteq H_1 $. 
    Then $ U $ and $ V $ have trivial intersection as $ L_0 $ and $ L_1 $ have no common eigenfunction. 
    Since they are finite-dimensional, by the assumption in~\emph{(iii)} we can find a $ w \in H_1 \setminus H_0 $ satisfying~\eqref{eq: def abstract weak sol ev eq} and~\eqref{eq: form bound for w} such that the sum $ E' = U+V+ \C w \subseteq H_1 $ is direct. 
    Then, for a function $ u+v+ cw \in E' $, $ u \in U $, $ v \in V $, $ c \in \C $, we have 
    \begin{equation}
        \mathfrak{a} ( u+v+cw , u+v+cw ) 
        = \mathfrak{a} (u,u) 
        +\mathfrak{a} (cw,cw) 
        + 2 \Re \mathfrak{a} (u,cw) 
        + \mathfrak{a} (v,v) 
        + 2 \Re \mathfrak{a} ( v , u+cw ) \, . 
    \end{equation}    
    The first three terms we have estimated earlier, and since $ L_1 v = \lambda v $ the last two terms equal $ \lambda ( \norm{v}^2_H + 2 \Re \scal{ v , u+cw }_H ) $. We arrive at 
    \begin{equation}
        \mathfrak{a} ( u+v+cw , u+v+cw ) 
        \leq \lambda \norm{ u+v+cw }^2_H \, . 
    \end{equation}
    Thus there are at least $ \dim E' $-many eigenvalues of $ L_1 $ no larger than $ \lambda $. Therefore, if $ \# F $ denotes the cardinality of a set $ F \subseteq \N $, we arrive at 
    \begin{align}
        \# \{ j : \mu_j < \lambda \} 
        &= \# \{ j : \mu_j \leq \lambda_k \} 
        - \dim \ker ( L_1 - \lambda ) \\
        &\geq \dim E' - \dim \ker ( L_1 - \lambda ) 
        = k+1 \, . 
        \qedhere
    \end{align}
\end{proof}

\section{The Levine--Weinberger inequality for weighted Schrödinger operators on convex domains}
    \label{sec: lw ineq}
On convex domains, Levine and Weinberger proved in~\cite{levine_weinberger} that the inequality $ \mu_{k+d} \leq \lambda_k $ for all $ k \in \N $ holds true for the Laplace operator $ L= -\Delta $. 
The essential idea of their proof is to apply Proposition~\ref{prop: abstract filonov} with the trial subspace $ W $ spanned by partial derivatives $ \d_j \varphi $ of one Dirichlet eigenfunction. 
For the standard Laplacian, it is clear that $ -\Delta \varphi = \lambda \varphi $ entails $ -\Delta \d_j \varphi = \lambda \d_j \varphi $, thus ensuring condition~\eqref{eq: def abstract weak sol ev eq}. 
Moreover, in convex domains the integration by parts Lemma~\ref{lem: ibp convex} below then yields~\eqref{eq: form bound for w}. 
The idea to use derivatives of Dirichlet eigenfunctions as a trial subspace also appears in~\cite{payne55, rohleder_lotoreichik, aldeghi_23,rohleder_schroed_ev_21, lotoreichik_magnetic_24, aldeghi_schroedinger} to cite a few. 

Under adequate assumptions, the ideas in~\cite{levine_weinberger} can be adapted to elliptic operators of the form $ L = \tfrac{1}{ \rho } ( -\Delta + V ) $. 
The arguments in this section are inspired by~\cite{rohleder_schroed_ev_21}, where an analog of the Levine--Weinberger inequality was proven for Schrödinger operators. 

Recall Assumptions~\ref{ass: reg of rho} and~\ref{ass: reg of V}. 
In this section we work for simplicity in a real-valued setting, and assume that the domain $ \Omega $ is piecewise regular according to the following definition. 
\begin{definition}
    \label{def: piecewise smooth domain}
    A Lipschitz domain $ \Omega \subseteq \R^d $ is called piecewise smooth if there is a closed subset $ \Sigma \subseteq \d \Omega $ of boundary measure zero such that $ \d \Omega \setminus \Sigma $ has finitely many connected components, all of which are smooth (i.e., locally given by the graph of a $ \Cont^\infty $-function).  
\end{definition}
The first result of this section generalizes the Pólya inequality $ \mu_2 \leq \lambda_1 $ to operators $ L= \tfrac{1}{ \rho } ( -\Delta +V ) $ under adequate convexity assumptions. 
\begin{theorem}
    \label{thm: convex densities}
    Assume $ \Omega $ is piecewise smooth and convex and that $ \rho, V \in \W^{1, \infty } ( \Omega ) $ (i.e., $ \nabla \rho , \nabla V \in \L^\infty ( \Omega )^d $). 
    \begin{enumerate}[(i)]
        \item
        If the function $ \lambda^L_1 \rho - V $ is convex on $ \Omega $, then the operator $ L = \tfrac{1}{ \rho } ( -\Delta + V ) $ satisfies the eigenvalue inequality $ \mu^L_d \leq \lambda^L_1 $. 
        \item If in addition there is a non-empty open subset of the boundary where all principal curvatures are strictly positive, or if $ D^2 ( \lambda^L_1 \rho - V ) $ is positive definite in an open subset of $ \Omega $, then $ \mu^L_d < \lambda^L_1 $.
        \item
        Under the assumptions in~(i) (resp. in~(ii)), if the domain $ \Omega $, the density $ \rho $ and the potential $ V $ are even with respect to all coordinate axes, then the improved inequality $ \mu^L_{d+1} \leq \lambda^L_1 $ holds (resp. the strict inequality $ \mu^L_{d+1} < \lambda^L_1 $).
        \end{enumerate}
\end{theorem}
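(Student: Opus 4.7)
\emph{Setup.} The plan is to apply the variational principle~\eqref{eq: variational principle for L} for the Neumann eigenvalues, with trial space built from directional derivatives of a first Dirichlet eigenfunction. Let $ \varphi = \varphi^L_1 $; by Proposition~\ref{prop: bdy reg of ef}, $ \varphi \in \H^2 ( \Omega ) $, and by Corollary~\ref{cor: dim trial deriv}, the subspace $ W = \{ b \cdot \nabla \varphi : b \in \R^d \} \subseteq \H^1_\rho ( \Omega ) $ has dimension $d$ and intersects $ \H^1_{ \rho , 0 } ( \Omega ) $ trivially.

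\emph{Proof of (i).} It suffices to show $ \mathfrak{a} (w,w) \leq \lambda^L_1 \norm{w}^2_\rho $ for every $ w = b \cdot \nabla \varphi \in W $; the estimate $ \mu^L_d \leq \lambda^L_1 $ then follows at once from~\eqref{eq: variational principle for L}. Differentiating $ -\Delta \varphi + V \varphi = \lambda^L_1 \rho \varphi $ in direction $b$ yields
\begin{equation*}
( -\Delta + V - \lambda^L_1 \rho ) w = \d_b ( \lambda^L_1 \rho - V ) \cdot \varphi .
\end{equation*}
Testing against $w$, integrating by parts once, and (exploiting $ \varphi |_{ \d \Omega } = 0 $) integrating by parts a second time on the resulting bulk term produces
\begin{equation*}
\mathfrak{a} (w,w) - \lambda^L_1 \norm{w}^2_\rho = -\tfrac{1}{2} \int_\Omega \d_b^2 ( \lambda^L_1 \rho - V ) \varphi^2 + \int_{ \d \Omega } w \, \d_\nu w .
\end{equation*}
The bulk term is $ \leq 0 $ by the convexity hypothesis on $ \lambda^L_1 \rho - V $ (the second directional derivative $ \d_b^2 $ is a.e.\ well-defined since $ \rho , V \in \W^{2, \infty } $), and the boundary term is $ \leq 0 $ by Lemma~\ref{lem: ibp convex} of the appendix, which exploits the non-negativity of the second fundamental form of $ \d \Omega $ on the regular part of the boundary.

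\emph{Proof of (ii).} Strictness is obtained by upgrading one of the two non-positive terms. Strict convexity of $ \lambda^L_1 \rho - V $ on an open set, combined with Proposition~\ref{prop: ef uniq cont} (so that $ \varphi \not\equiv 0 $ there), makes the bulk term strictly negative for every $b \neq 0$. Strict positivity of all principal curvatures on an open portion of $ \d \Omega $, combined with Corollary~\ref{cor: no ef dir neum} (so that $ \d_\nu \varphi $ does not vanish identically there), makes the boundary term strictly negative via Lemma~\ref{lem: ibp convex}. Either way, $ \max_{ w \in W } \mathfrak{a} (w,w) / \norm{w}^2_\rho < \lambda^L_1 $, whence $ \mu^L_d < \lambda^L_1 $.

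\emph{Proof of (iii).} Under the assumed symmetry, one enlarges the trial space to $ U \oplus W $, where $ U = \spann \{ \varphi \} $. Since $ U \subseteq \H^1_{ \rho , 0 } ( \Omega ) $ and $ W \cap \H^1_{ \rho , 0 } = \{ 0 \} $, this is a direct sum of dimension $ d + 1 $. A Pohozaev--Rellich-type computation, multiplying the eigenvalue equation by $ b \cdot \nabla \varphi $ and integrating, gives the cross-term identity
\begin{equation*}
\mathfrak{a} ( c \varphi , b \cdot \nabla \varphi ) - \lambda^L_1 \scal{ c \varphi , b \cdot \nabla \varphi }_\rho = c \int_{\d \Omega} ( b \cdot \nu ) ( \d_\nu \varphi )^2 = c \int_\Omega b \cdot \nabla ( \lambda^L_1 \rho - V ) \, \varphi^2 .
\end{equation*}
Choosing $\varphi$ invariant under each coordinate reflection (possible because the ground state is simple), the bulk form of this cross term vanishes: $ \varphi^2 $ is even in every coordinate while each $ \d_j ( \lambda^L_1 \rho - V ) $ is odd in $ x_j $. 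Combining this vanishing with $ \mathfrak{a} ( u , u ) = \lambda^L_1 \norm{u}^2_\rho $ for $ u \in U $ and the bound from~(i), one deduces $ \mathfrak{a} ( f , f ) \leq \lambda^L_1 \norm{f}^2_\rho $ for every $ f \in U \oplus W $; the variational principle yields $ \mu^L_{d+1} \leq \lambda^L_1 $, and the strict version follows as in (ii).

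\emph{Main obstacle.} The geometric heart of the argument is the integration-by-parts Lemma~\ref{lem: ibp convex} controlling $ \int_{ \d \Omega } w \, \d_\nu w $ on a merely piecewise smooth convex domain, which requires a careful treatment of the boundary second fundamental form. The clean reduction of the cross-term $ \mathfrak{a} (u,w) - \lambda^L_1 \scal{u,w}_\rho $ to a bulk integral that vanishes under reflection symmetry is what supplies the extra dimension in (iii).
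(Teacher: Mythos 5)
Your proofs of (i) and (ii) follow essentially the paper's route: differentiate the eigenvalue equation in direction $b$, test against $w=b\cdot\nabla\varphi_1$, integrate by parts once more using $\varphi_1|_{\d\Omega}=0$ to produce the Hessian of $\lambda^L_1\rho-V$, and control the boundary contribution through the convexity of $\d\Omega$; this is exactly the content of Lemma~\ref{lem: eigenf ibp convex} and the computation around~\eqref{eq: convex computation}, and your strictness discussion in (ii) (strict convexity on an open set, or positive principal curvatures plus Corollary~\ref{cor: no ef dir neum}) matches the paper's. One bookkeeping point: Lemma~\ref{lem: ibp convex} does not literally assert $\int_{\d\Omega} w\,\d_\nu w\leq 0$; it expresses $\int_\Omega\abs{\nabla w}^2$ through $\int_\Omega\Delta\varphi_1\,(b^tD^2\varphi_1 b)$ and the curvature term. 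Your boundary term coincides with the curvature term only because $\Delta\varphi_1=(V-\lambda^L_1\rho)\varphi_1$ has vanishing trace (and $\d_\nu w$ only makes sense weakly, via $\Delta w\in\L^2(\Omega)$); this should be said, but it is cosmetic rather than substantive.

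For (iii) you take a genuinely different route: you enlarge the trial space to $\spann\{\varphi_1\}\oplus W$ and kill the cross term by a Pohozaev-type identity together with the reflection symmetry, whereas the paper keeps the $d$-dimensional space $W$, observes that $W\perp_\rho\psi_1$ because the first Neumann eigenfunction is also even, and invokes the constrained variational principle~\eqref{eq: variational principle for L}. Your cross-term identity is correct, and your argument does yield the non-strict bound $\mu^L_{d+1}\leq\lambda^L_1$ (it is in effect Proposition~\ref{prop: abstract filonov}(i) with the orthogonality hypothesis replaced by the symmetry cancellation); a small advantage is that you need no information about $\psi_1$. However, the claim that the strict inequality ``follows as in (ii)'' has a gap: on your $(d+1)$-dimensional space the Rayleigh quotient attains the value $\lambda^L_1$ at $\varphi_1$ itself, so the maximum over the trial space is not strictly below $\lambda^L_1$ and the argument of (ii) does not apply verbatim. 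To close this you need either the paper's route (under the hypotheses of (ii) the Rayleigh quotient is strictly below $\lambda^L_1$ on $W\setminus\{0\}$ and $W\perp_\rho\psi_1$, so the constrained min-max gives $\mu^L_{d+1}<\lambda^L_1$ directly), or an equality-case analysis in the spirit of Proposition~\ref{prop: abstract filonov}(ii): if $\mu^L_{d+1}=\lambda^L_1$, an element of your trial space orthogonal to $\psi_1,\ldots,\psi_d$ would be forced (by the strict inequality on $W\setminus\{0\}$ and the vanishing cross term) to be a nonzero multiple of $\varphi_1$ and simultaneously a Neumann eigenfunction, contradicting Corollary~\ref{cor: no ef dir neum}.
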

The proof will be given later in this section. 
With the method of Proposition~\ref{prop: abstract filonov}, a similar argument gives the following generalization of the Levine-Weinberger inequality on convex domains for operators $ L= \tfrac{1}{ \rho } ( -\Delta +V ) $ with coefficients constant along certain directions. 
\begin{theorem}
    \label{thm: lw when grad has low dim}
    Assume $ \Omega $ is piecewise smooth and convex and suppose that $ \d_b \rho $ and $ \d_b V $ vanish in $ \Omega $ for all $ b $ in a $ r $-dimensional subspace $ E \subseteq \R^d $. 
    Then the eigenvalues of $ L = \tfrac{1}{ \rho } ( -\Delta + V ) $ satisfy $ \mu^L_{k+r} \leq \lambda^L_k $ for all $ k \in \N $. 
    If in addition there is a non-empty open subset of the boundary where all principal curvatures are strictly positive, then $ \mu^L_{k+r} < \lambda^L_k $. 
\end{theorem}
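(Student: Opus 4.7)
The plan is to apply Proposition~\ref{prop: abstract filonov}(i) in the setting $ H = \L^2_\rho ( \Omega ) $, $ H_0 = \H^1_{ \rho , 0 } ( \Omega ) $, $ H_1 = \H^1_\rho ( \Omega ) $, using a Levine--Weinberger-style trial subspace. Fix $ k \in \N $, let $ \varphi = \varphi^L_k $ be a real-valued Dirichlet eigenfunction corresponding to $ \lambda = \lambda^L_k $, which lies in $ \H^2 ( \Omega ) $ by Proposition~\ref{prop: bdy reg of ef} since $ \Omega $ is convex and $ A \equiv \id $. Set
\[
W = \{ b \cdot \nabla \varphi : b \in E \} \subseteq \H^1_\rho ( \Omega ) \, .
\]
Corollary~\ref{cor: dim trial deriv} applied to $ \varphi $ tells us that $ W $, being a subspace of the full $ d $-dimensional space $ \{ b \cdot \nabla \varphi : b \in \C^d \} $, has dimension $ r $ and trivial intersection with $ \H^1_0 ( \Omega ) = \H^1_{ \rho , 0 } ( \Omega ) $.

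Next I would verify that each $ w = b \cdot \nabla \varphi $, $ b \in E $, is a weak solution of $ L w = \lambda w $ in $ \Omega $, which by the remark after Proposition~\ref{prop: abstract filonov} amounts to~\eqref{eq: def abstract weak sol ev eq}. This is the place where the hypothesis $ \d_b \rho = \d_b V = 0 $ is used: differentiating $ -\Delta \varphi + V \varphi = \lambda \rho \varphi $ distributionally along $ b $, the cross terms $ ( \d_b V ) \varphi $ and $ \lambda ( \d_b \rho ) \varphi $ drop out, leaving
\[
-\Delta ( b \cdot \nabla \varphi ) + V \, b \cdot \nabla \varphi = \lambda \rho \, b \cdot \nabla \varphi
\]
in $ \Omega $. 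Multiplying by any $ u \in \H^1_{ \rho , 0 } ( \Omega ) $ and integrating by parts (noting that $ u $ vanishes on $ \d \Omega $) then gives~\eqref{eq: def abstract weak sol ev eq}.

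The heart of the argument, and the only place where convexity of $ \Omega $ enters, is verifying~\eqref{eq: form bound for w}. Here I would invoke the integration by parts identity of Lemma~\ref{lem: ibp convex} from the Appendix, which for $ \varphi \in \H^2 ( \Omega ) \cap \H^1_0 ( \Omega ) $ on a piecewise smooth convex domain yields
\[
\int_\Omega | \nabla ( b \cdot \nabla \varphi ) |^2 \leq \int_\Omega \bigl( -\Delta ( b \cdot \nabla \varphi ) \bigr) \, ( b \cdot \nabla \varphi ) \, ,
\]
the defect being a boundary integral involving the second fundamental form of $ \d \Omega $ weighted by $ ( \d_\nu \varphi )^2 $, which is non-negative on a convex domain. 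Inserting $ -\Delta w = \lambda \rho w - V w $ on the right-hand side produces $ \int_\Omega | \nabla w |^2 + V w^2 \leq \lambda \int_\Omega \rho w^2 $, which is~\eqref{eq: form bound for w}. Proposition~\ref{prop: abstract filonov}(i) then delivers $ \mu^L_{k+r} \leq \lambda^L_k $.

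For the strict inequality under the additional curvature hypothesis I would invoke Proposition~\ref{prop: abstract filonov}(ii): the absence of a common Dirichlet--Neumann eigenfunction is Corollary~\ref{cor: no ef dir neum}, and on the open subset $ \omega \subseteq \d \Omega $ where the second fundamental form is positive definite, the boundary integral from Lemma~\ref{lem: ibp convex} is a positive definite quadratic form in $ b $ weighted by $ ( \d_\nu \varphi )^2 $. By Corollary~\ref{cor: no ef dir neum} the Neumann trace $ \d_\nu \varphi $ cannot vanish identically on $ \omega $, so~\eqref{eq: form bound for w} becomes strict for every non-zero $ w \in W $. The main technical obstacle in the plan is the precise form of Lemma~\ref{lem: ibp convex} on a merely piecewise smooth convex domain, where the singular set $ \Sigma \subseteq \d \Omega $ has to be handled by an approximation or cut-off argument; but this is the content of the Appendix and can be invoked as a black box here.
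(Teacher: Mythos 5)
Your proposal is correct and follows essentially the same route as the paper: the same trial subspace $ W = \{ b \cdot \nabla \varphi_k : b \in E \} $, dimension and trivial intersection via Corollary~\ref{cor: dim trial deriv}, the weak equation $ Lw = \lambda^L_k w $ from $ \d_b \rho = \d_b V = 0 $, the form bound~\eqref{eq: form bound for w} from Lemma~\ref{lem: ibp convex} with the non-negative convexity boundary term, and strictness via Proposition~\ref{prop: abstract filonov}\emph{(ii)} together with Corollary~\ref{cor: no ef dir neum}. The only (cosmetic) difference is that the paper packages the form bound as Lemma~\ref{lem: eigenf ibp convex}, working directly with $ \varphi_k \in \H^2(\Omega) $, whereas your intermediate inequality $ \int_\Omega \abs{ \nabla w }^2 \leq \int_\Omega ( -\Delta w ) \, w $ is not literally the statement of Lemma~\ref{lem: ibp convex} and needs one more integration by parts (and an interpretation of $ \Delta w $ for $ \varphi_k $ only in $ \H^2 $) to justify, which is exactly what that lemma's proof supplies.
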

Both proofs hinge on the following integration by parts identity, which is proven for smooth domains and functions in~\cite{levine_weinberger}. 
A proof of this result is given in the Appendix, where we also (essentially) remove the convexity assumption on $ \Omega $. 
Here, we denote by $ D^2 f $ the Hessian matrix of a function $ f $. 
\begin{lemma}
    \label{lem: ibp convex}
    Assume $ \Omega \subseteq \R^d $ is a piecewise smooth convex domain. 
    Let $ \varphi \in \H^2 ( \Omega ) \cap \H^1_0 ( \Omega ) $ be real-valued and $ b \in \R^d $, and set $ v = b \cdot \nabla \varphi \in \H^1 ( \Omega ) $. 
    Then 
    \begin{equation*}
        \int_\Omega \abs{ \nabla v }^2 
        = \int_\Omega \Delta \varphi \, 
        ( b^t D^2 \varphi b ) 
        - \frac{1}{2} \int_{ \d \Omega } \abs{ \nabla \varphi }^2 
        b^t B b \, , 
    \end{equation*}
    where $ B(x) \in \R^{ d \times d } $, $ x \in \d \Omega \setminus \Sigma $, is a symmetric matrix\footnote{see the appendix for a precise definition of $ B $} whose $ d $ eigenvalues are given by the $ (d-1) $ principal curvatures of $ \d \Omega $ at $ x \in \d \Omega $ and their sum. 
\end{lemma}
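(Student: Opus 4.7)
My plan is to establish the identity first in the classical regime of a smooth convex domain $ \Omega $ and a smooth function $ \varphi \in \Cont^\infty ( \overline{ \Omega } ) $ vanishing on $ \d \Omega $, and then to reach the full statement by an approximation argument. For the smooth case, I will proceed via two successive integrations by parts. Writing $ v = b \cdot \nabla \varphi $ so that $ \d_j v = ( D^2 \varphi \, b )_j $, the formula
\[
\int_\Omega \abs{ \nabla v }^2 = - \int_\Omega v \, \Delta v + \int_{ \d \Omega } v \, \d_\nu v
\]
comes first; since $ \Delta $ commutes with the constant-coefficient operator $ b \cdot \nabla $, the integrand satisfies $ v \, \Delta v = ( b \cdot \nabla \varphi )( b \cdot \nabla \Delta \varphi ) $, and a second integration by parts turns $ -\int_\Omega v \, \Delta v $ into the desired bulk term $ \int_\Omega \Delta \varphi \, ( b^t D^2 \varphi \, b ) $, at the cost of an additional boundary contribution $ - \int_{ \d \Omega } ( b \cdot \nu )( b \cdot \nabla \varphi ) \Delta \varphi $.

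To simplify the remaining boundary integral I will systematically exploit the Dirichlet condition, which forces $ \nabla \varphi = ( \d_\nu \varphi ) \nu $ and $ \abs{ \nabla \varphi }^2 = ( \d_\nu \varphi )^2 $ on $ \d \Omega $. Decomposing $ b = b_T + ( b \cdot \nu ) \nu $ into tangential and normal parts and using the Gauss-type identities obtained by differentiating $ \varphi \equiv 0 $ twice along the boundary --- namely $ D^2 \varphi ( T_1 , T_2 ) = \mathrm{II} ( T_1 , T_2 ) \, \d_\nu \varphi $ for tangent $ T_1 , T_2 $, together with $ D^2 \varphi ( T , \nu ) = T ( \d_\nu \varphi ) $ and $ D^2 \varphi ( \nu , \nu ) = \Delta \varphi - H \, \d_\nu \varphi $ --- the two $ \Delta \varphi $-contributions cancel, and the boundary integrand collapses to $ ( b \cdot \nu )( \d_\nu \varphi ) \, b_T ( \d_\nu \varphi ) - H ( b \cdot \nu )^2 ( \d_\nu \varphi )^2 $. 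A final tangential integration by parts, rewriting the first piece as $ \tfrac{1}{2} ( b \cdot \nu ) \nabla_{ b_T } ( ( \d_\nu \varphi )^2 ) $ and moving the tangential divergence onto the factor $ ( b \cdot \nu ) b_T $ --- for which a principal-frame computation yields $ \div_T ( ( b \cdot \nu ) b_T ) = \sum_i \kappa_i ( b \cdot e_i )^2 - H ( b \cdot \nu )^2 $ --- assembles the prefactor of $ -\tfrac{1}{2} ( \d_\nu \varphi )^2 $ into $ \sum_i \kappa_i ( b \cdot e_i )^2 + H ( b \cdot \nu )^2 = b^t B b $, where $ B $ acts as the Weingarten map on the tangent space and as multiplication by $ H = \sum_i \kappa_i $ in the normal direction.

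The main obstacle I foresee is not the computation but the passage from smooth data to the setting of the lemma, with $ \varphi \in \H^2 ( \Omega ) \cap \H^1_0 ( \Omega ) $ and $ \Omega $ only piecewise smooth. My plan is to exhaust $ \Omega $ by smooth convex domains $ \Omega_n \nearrow \Omega $ from the inside, apply the smooth identity on each $ \Omega_n $ to a mollification of $ \varphi $, and pass to the limit. Convexity is what makes this feasible, since it forces $ B \geq 0 $ and thus controls the sign of the boundary integrand so that monotone and dominated convergence apply; the hypothesis that the singular set $ \Sigma $ has vanishing $ ( d-1 ) $-Hausdorff measure is then what prevents a hidden concentration of curvature mass on $ \Sigma $ in the limit.
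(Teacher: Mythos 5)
Your smooth-case computation is correct and is essentially the computation of Lemma~\ref{lem: smooth ibp convex} in the paper, merely organized through the second fundamental form of $ \d \Omega $ rather than through the extended normal field and the curvature matrix $ K $; that part is fine. The genuine gap is in your limiting argument. You propose to exhaust $ \Omega $ by smooth convex domains $ \Omega_n \nearrow \Omega $ and to ``apply the smooth identity on each $ \Omega_n $ to a mollification of $ \varphi $''. But every cancellation in your boundary computation (the identities $ \nabla \varphi = ( \d_\nu \varphi ) \nu $, $ D^2 \varphi ( T_1 , T_2 ) = \mathrm{II} ( T_1 , T_2 ) \d_\nu \varphi $, etc.) uses that $ \varphi $ vanishes identically on the boundary of the domain over which you integrate, and a mollification of a function in $ \H^2 ( \Omega ) \cap \H^1_0 ( \Omega ) $ does \emph{not} vanish on $ \d \Omega_n $ --- its tangential gradient there is genuinely nonzero. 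So the clean identity simply does not hold on $ \Omega_n $; one would have to carry along all the extra boundary terms of the non-Dirichlet version and show they disappear in the limit, which is not addressed. Moreover, even granting that, the curvature of any smooth inner approximation of a convex domain with edges or corners necessarily blows up near the singular set $ \Sigma $ (a fixed amount of curvature mass concentrates there, e.g.\ total turning angle $ \pi /2 $ at each corner of a square), and the trace of $ \abs{ \nabla \varphi }^2 $ need not vanish at $ \Sigma $. The fact that $ \Sigma $ has zero $ (d-1) $-dimensional measure therefore does not by itself exclude a nonzero defect term in the limit; some quantitative input near $ \Sigma $ is required.

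The paper resolves exactly these two points by a different approximation scheme: the domain is kept fixed (piecewise smooth), and it is the vector field $ F = \nabla \varphi $ --- shown in Lemma~\ref{lem: grad phi normal} to be curl-free and normal at the boundary --- that is approximated in $ \H^1 ( \Omega )^d $ by smooth fields $ F^n $ which are again curl-free, normal at the boundary, and supported away from $ \Sigma $. This uses that $ \Sigma $ has zero \emph{capacity} (not just zero surface measure) to produce smooth approximants supported off $ \Sigma $, a trace correction to restore normality, and a Hodge-theoretic boundary value problem $ \curl X = \curl U^n $, $ X |_{ \d \Omega } = 0 $ (whence the Betti number hypothesis in Proposition~\ref{lem: ibp on conv dom}) to restore curl-freeness; the smooth boundary identity of Lemma~\ref{lem: smooth ibp convex} then applies verbatim to each $ F^n $ and the limit is taken in $ \H^1 ( \Omega )^d $ with the boundary terms converging via the trace. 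If you want to keep your inner-exhaustion strategy you would need to supply (i) the full identity on $ \Omega_n $ without the Dirichlet condition and a proof that the extra boundary terms vanish as $ n \to \infty $, and (ii) a mechanism, playing the role of the zero-capacity argument, showing that the concentrating curvature of $ \d \Omega_n $ near $ \Sigma $ contributes nothing in the limit; as written, neither is established.
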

\begin{remark}
    \label{rem: alt to lw id}
    In~\cite{rohleder_lotoreichik}, this identity is proven for polyhedral domains (i.e., piecewise smooth domains with flat boundaries), where the boundary integral vanishes. Eigenvalue inequalities on convex domains with non-flat boundaries are then obtained through polyhedral domain approximation from outside. 
    While this approach would also work here, we prefer directly applying the formula on piecewise smooth domains: this avoids the need to extend the coefficients of $ L $ beyond $ \Omega $, and the boundary integral can also be used to leverage a strict eigenvalue inequality.
\end{remark}
If $ \Omega $ is convex, $ \d \Omega $ has non-negative curvature so the boundary integral appearing in Lemma~\ref{lem: ibp convex} is non-negative. 
This is the key idea in the proofs of Theorems~\ref{thm: convex densities} and~\ref{thm: lw when grad has low dim}. We compile the core computation in the following lemma.
\begin{lemma}
    \label{lem: eigenf ibp convex}
    Assume $ \Omega $ is a piecewise smooth convex domain and $ \varphi \in \H^1_{ \rho , 0 } ( \Omega ) $ is a (real-valued) Dirichlet eigenfunction of $ L = \tfrac{1}{ \rho } ( -\Delta + V ) $ with eigenvalue $ \lambda $. 
    Let $ b \in \R^d $ and assume that $ \rho $ and $ V $ have a bounded weak derivative in direction $ b $, i.e., $ \d_b \rho $, $ \d_b V \in \L^\infty ( \Omega ) $. 
    Then the function $ v = b \cdot \nabla \varphi $ lies in $ \H^1_\rho ( \Omega ) $ and satisfies the inequality 
    \begin{equation}
        \label{eq: middle of long align}
        \int_\Omega \abs{ \nabla v }^2 
        + V \abs{v}^2 
        \leq \lambda \int_\Omega \rho \abs{v}^2 
        + \int_\Omega \varphi v \, 
         \d_b ( \lambda \rho - V ) \, . 
    \end{equation}
    If $ \int_\Omega b^t B b > 0 $, where $ B $ is as in Lemma~\ref{lem: ibp convex}, then the inequality is strict. 
\end{lemma}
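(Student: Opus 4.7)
The plan is to combine Lemma~\ref{lem: ibp convex} with the Dirichlet eigenvalue equation for $\varphi$ and a second integration by parts in the direction $b$. First, since $\Omega$ is convex and $A\equiv\id_{\R^d}$, Proposition~\ref{prop: bdy reg of ef} gives $\varphi\in\H^2(\Omega)$, so that $v=b\cdot\nabla\varphi\in\H^1(\Omega)=\H^1_\rho(\Omega)$ and $\d_b v=b^t D^2\varphi\,b\in\L^2(\Omega)$.

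Applying Lemma~\ref{lem: ibp convex} yields
\[
\int_\Omega\abs{\nabla v}^2 = \int_\Omega \Delta\varphi\,\d_b v - \tfrac{1}{2}\int_{\d\Omega}\abs{\nabla\varphi}^2\,b^t B b.
\]
The eigenvalue equation gives $\Delta\varphi=-(\lambda\rho-V)\varphi$, and the product rule (combined with $\d_b\varphi=v$) yields $\varphi\,\d_b v=\d_b(\varphi v)-v^2$. Substituting both reduces the computation to evaluating $\int_\Omega(\lambda\rho-V)\,\d_b(\varphi v)$. Because $\varphi\in\H^1_{\rho,0}(\Omega)$, the product $\varphi v$ vanishes on $\d\Omega$ in the trace sense, so this integral equals $-\int_\Omega \d_b(\lambda\rho-V)\,\varphi v$ by integration by parts in direction $b$. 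Collecting terms yields
\[
\int_\Omega\abs{\nabla v}^2+V\abs{v}^2 = \lambda\int_\Omega\rho\abs{v}^2+\int_\Omega\varphi v\,\d_b(\lambda\rho-V)-\tfrac{1}{2}\int_{\d\Omega}\abs{\nabla\varphi}^2\,b^t Bb.
\]
By convexity of $\Omega$, the principal curvatures of $\d\Omega$ are non-negative, hence so is every eigenvalue of $B$, so $b^tBb\geq 0$ pointwise on $\d\Omega\setminus\Sigma$. Discarding the resulting non-positive boundary term gives the claimed inequality.

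For the strict version, it suffices to show the boundary integral $\int_{\d\Omega}\abs{\nabla\varphi}^2\,b^tBb$ is strictly positive. Since $B$ depends continuously on $x$ and is positive semi-definite on $\d\Omega\setminus\Sigma$, the set $\{b^tBb>0\}$ is relatively open there; on $\d\Omega$ the Dirichlet condition gives $\nabla\varphi=(\d_\nu\varphi)\nu$, so $\abs{\nabla\varphi}^2=\abs{\d_\nu\varphi}^2$, and if this vanished identically on the set above, Corollary~\ref{cor: no ef dir neum} would force $\varphi\equiv0$ in $\Omega$, a contradiction.

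The main technical care point I anticipate is the second integration by parts under the minimal regularity hypothesis $\d_b\rho,\d_b V\in\L^\infty(\Omega)$ (only the \emph{directional} weak derivative is assumed bounded) together with $\varphi v$ being only in $\W^{1,1}_0(\Omega)$ a priori. This is handled routinely by mollifying $\rho$ and $V$ along the direction $b$, applying the standard IBP, and passing to the limit using the uniform $\L^\infty$-bounds on $\d_b\rho,\d_b V$ and dominated convergence.
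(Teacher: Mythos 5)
Your proof is correct and follows essentially the same route as the paper: apply Lemma~\ref{lem: ibp convex}, substitute the eigenvalue equation, integrate by parts in the direction $b$ using the vanishing trace of $\varphi$, discard the non-negative curvature boundary term by convexity, and obtain strictness via Corollary~\ref{cor: no ef dir neum}. The only difference is cosmetic bookkeeping (you group $\lambda\rho - V$ and perform a single directional integration by parts on $\d_b(\varphi v)$, whereas the paper treats the $V$ and $\lambda\rho$ contributions separately), and your remark on justifying the integration by parts under the hypothesis $\d_b\rho,\d_b V\in\L^\infty(\Omega)$ is a valid way to handle a step the paper carries out implicitly.
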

\begin{proof}
    The function $ v $ lies in $ \H^1 ( \Omega ) $ by Proposition~\ref{prop: bdy reg of ef}.  
    Applying Lemma~\ref{lem: ibp convex} and since the boundary integral is non-negative, the eigenvalue equation for $ \varphi $ gives 
    \begin{align}
        \int_\Omega \abs{ \nabla v }^2 + V \abs{v}^2 
        &= \int_\Omega \Delta \varphi \, 
        ( b^t D^2 \varphi b ) 
        - \frac{1}{2} \int_{ \d \Omega } \abs{ \nabla \varphi }^2 
        b^t B b 
        + \int_\Omega V \abs{ \d_b \varphi }^2 \\
        &\leq \int_\Omega \Delta \varphi ( b^t D^2 \varphi b ) 
        + ( \d_b ( V \varphi ) - \varphi \d_b V ) \d_b \varphi \\
        &= \int_\Omega ( - \lambda \rho \varphi + V \varphi ) 
        ( b^t D^2 \varphi b  ) 
        - V \varphi ( b^t D^2 \varphi b ) 
        - \varphi \d_b \varphi \d_b V \\ 
        &= \int_\Omega -\lambda \rho \varphi ( b^t D^2 \varphi b ) 
        - \varphi \d_b \varphi \d_b V \\
        &= \int_\Omega \lambda \rho \, \d_b \varphi \, \d_b \varphi 
        + \varphi \, \d_b \varphi \, \d_b ( \lambda \rho - V ) \, . 
    \end{align}
    For the strict inequality, note that $ \int_\Omega b^t B b > 0 $ and $ b^t B b \geq 0 $ on $ \d \Omega $ give $ \int_\Omega \abs{ \nabla \varphi }^2 b^t B b > 0 $ by Corollary~\ref{cor: no ef dir neum}, so in the second line above the inequality is strict. 
\end{proof}
We now prove Theorem~\ref{thm: lw when grad has low dim}, and Theorem~\ref{thm: convex densities} below.
\begin{proof}[Proof of Theorem~\ref{thm: lw when grad has low dim}]
    In the setting of Proposition~\ref{prop: abstract filonov}, we set $ H = \L^2_\rho ( \Omega ) $, $ H_1 = \H^1_\rho ( \Omega ) $, $ H_0 = \H^1_{ \rho , 0 } ( \Omega ) $, $ L_1 = L_\text{N}  $ and $ L_0 = L_\text{D} $, as well as $ \lambda = \lambda^L_k $. 
    Let $ \{ \varphi_l \}_{ l \in \N } $ denotes as usual an orthonormal basis of real-valued Dirichlet eigenfunctions of $ L $ and consider the subspace 
    \begin{equation}
        \label{eq: def of W lw low dim}
        W = \{ b \cdot \nabla \varphi_k : b \in E \} 
        \subseteq \L^2_\rho ( \Omega ) \, . 
    \end{equation}
    Since $ \Omega $ is convex, by Proposition~\ref{prop: bdy reg of ef} $ W $ is a subspace of $ \H^1_\rho ( \Omega ) $.
    By Corollary~\ref{cor: dim trial deriv}, $ W $ has dimension $ r $ and trivial intersection with $ \H^1_{ \rho , 0 } ( \Omega ) $.
    Let $ b \in E $ and $ w = b \cdot \nabla \varphi \in W $.
    Since $ b \cdot \nabla ( \lambda^L_k \rho - V ) $ vanishes on $ \Omega $, Lemma~\ref{lem: eigenf ibp convex} ensures that $ w $ satisfies the estimate~\eqref{eq: form bound for w}.
    Moreover, we compute 
    \begin{equation}
        -\Delta w + V w 
        = -\d_b \Delta \varphi_k + V w 
        = \d_b ( \lambda^L_k \rho \varphi_k 
        - V \varphi_k ) 
        + V v 
        = \lambda^L_k \rho w 
        + \varphi_k \d_b ( \lambda^L_k \rho 
        - V ) \, . 
    \end{equation}
    As earlier, the last term vanishes, so $ w $ satisfies~\eqref{eq: def abstract weak sol ev eq}: for all $ u \in \H^1_{ \rho , 0 } ( \Omega ) $, 
    \begin{align}
        \int_\Omega \nabla w \nabla u + V w u
        = \int_\Omega ( -\Delta + V ) w \, u 
        = \lambda^L_k \int_\Omega \rho w u \, . 
    \end{align}
    Hence the assertion follows from Proposition~\ref{prop: abstract filonov}\emph{(i)}. 

    Now assume the strict curvature condition. Then there holds strict inequality in Lemma~\ref{lem: eigenf ibp convex}, so we obtain $ \int_\Omega \abs{ \nabla w }^2 + V \abs{w}^2 < \lambda^L_k \int_\Omega \rho \abs{w}^2 $ whenever $ b \neq 0 $. The strict inequality $ \mu^L_{k+r} < \lambda^L_k $ then follows from of Proposition~\ref{prop: abstract filonov}\emph{(ii)} (and Corollary~\ref{cor: no ef dir neum}). 
\end{proof}
\begin{proof}[Proof of Theorem~\ref{thm: convex densities}]
    Consider the subspace 
    \begin{equation}
        \label{eq: def of U proof of conv dens}
        U = \{ b \cdot \nabla \varphi_1 : b \in \R^d \} 
        \subseteq \L^2_\rho ( \Omega ) \, , 
    \end{equation}
    where $ \varphi_1 \in \H^1_{ \rho , 0 } ( \Omega ) $ is a real-valued first Dirichlet eigenfunction of $ L $. 
    Then $ U $ is a subspace of $ \H^1_\rho ( \Omega ) $ by Lemma~\ref{prop: bdy reg of ef} and has dimension $ d $ by Corollary~\ref{cor: dim trial deriv}. 
    For $ v = b \cdot \nabla \varphi_1 \in U $,~\eqref{eq: middle of long align} and an integration by parts give 
    \begin{align}
        \int_\Omega \abs{ \nabla v }^2 + V \abs{v}^2 
        &\leq \lambda^L_1 \int_\Omega \rho \abs{v}^2 
        + \int_\Omega \varphi_1 ( b \cdot \nabla ( \lambda^L_1 \rho 
        - V ) ) \, b \cdot \nabla \varphi_1 \\
        \label{eq: convex computation}
        &= \lambda^L_1 \int_\Omega \rho \abs{v}^2 
        + \frac{1}{2} \int_\Omega ( b \cdot \nabla ( \lambda^L_1 \rho 
        - V ) ) \, b \cdot \nabla ( \varphi_1^2 )  \\ 
        &= \lambda^L_1 \int_\Omega \rho \abs{v}^2  
        - \frac{1}{2} \int_\Omega ( b^t D^2 ( \lambda_1^L \rho 
        - V ) b ) \ \varphi_1^2 \, . 
    \end{align}
    Here, since $ \lambda^L_1 $ is convex, $ D^2 ( \lambda_1^L \rho - V ) $ is a positive semi-definite matrix of Radon measures on $ \Omega $, for details see~\cite[Chapter~6.3]{fine_ppties_of_fct}. 
    But then $ ( b^t D^2 ( \lambda_1^L \rho - V ) b ) \geq 0 $ on $ \Omega $,  
    so the last integral is non-negative, hence the expression in the last line above is bounded by $ \lambda^L_1 \norm{v}^2_\rho $. Thus $ \dim U = d $ gives $ \mu^L_d \leq \lambda^L_1 $. 
    For~\emph{(ii)}, strict eigenvalue inequality follows from a strict inequality in the first line above or by the fact that for any $ b \neq 0 $, it holds $ b^t D^2 ( \lambda^L_1 \rho - V ) b > 0 $ on an open subset of $ \Omega $. 

    Under the additional symmetry assumption in~\emph{(iii)}, the Dirichlet eigenfunction $ \varphi_1 $ is even with respect to all coordinate axes. This is because $ \lambda^L_1 $ is simple by Courant's nodal theorem, and reflecting $ \varphi_1 $ still gives an eigenfunction. 
    Thus, each partial derivative $ \d_j \varphi_1 $, $ j=1, \ldots , d $ is odd in the direction $ j $ and even in all other directions. 
    Similarly also the first Neumann eigenfunction $ \psi_1 $ is even (as well as $ \rho $), therefore we have 
    \begin{equation}
        \int_\Omega \rho \, \d_j \varphi_1 \, \psi_1 
        = 0 \, , 
    \end{equation}
    thus $ U \perp_\rho \psi_1 $. 
    By the variational principle~\eqref{eq: variational principle for L}
    it follows $ \mu^L_{d+1} \leq \lambda^L_1 $. 
\end{proof}
\begin{remark}
    If $ \lambda^L_1 \rho - V $ is convex only along $ r $-many directions for some $ r \in \{ 1 , \ldots , d \} $, that is, $ D^2 ( \lambda^L_1 \rho - V ) (x) $ is positive semi-definite on an $ r $-dimensional subspace $ E \subseteq \R^d $ independent of $ x $, an obvious modification of the above proof gives $ \mu^L_r \leq \lambda^L_1 $.
\end{remark}
\begin{example}
    On a convex and bounded domain $ \Omega \subseteq \R^d $, for any $ \xi \in \R^d $ and strictly positive $ f \in \L^\infty ( \R ) $, the inhomogeneous membrane operator $ L = -\tfrac{1}{ \rho } \Delta $ with density $ \rho (x) = f( \xi \cdot x ) $ satisfies $ \mu^L_{k+d-1} \leq \lambda^L_k $ for all $ k \in \N $ according to Theorem~\ref{thm: lw when grad has low dim}. 
    If we add to the operator $ L $ a potential $ V $ of the form $ V(x) = g( \eta \cdot x ) $ for some function $ g \in \L^\infty ( \R ) $ and $ \eta \in \R^d $, then we have the eigenvalue inequality $ \mu^L_{k+d-2} \leq \lambda^L_k $ for all $ k $, and even $ \mu^L_{k+d-1} \leq \lambda^L_k $ if $ \xi $ and $ \eta $ are colinear.  
\end{example}
\begin{example}
    \label{ex: density x squared convex}
    For any dimension $ d \in \N $, any $ \alpha \geq 1 $ and any convex domain $ \Omega \subseteq \R^d $ the density $ \rho (x) = \abs{x}^\alpha $ is convex. 
    If $ \Omega $ has positive distance to the origin, then $ \rho $ is admissible. 
    By Theorem~\ref{thm: convex densities}, the inhomogeneous membrane operator $ L = - \abs{x}^{ -\alpha } \Delta $ satisfies $ \mu^L_d < \lambda^L_1 $. 
    If $ \Omega \subseteq \R^d $ is convex and symmetric with respect to each coordinate axis, then for $ c>0 $ the operator $ L_c = -\tfrac{1}{ \rho_c } \Delta $ with density $ \rho_c (x) = c + \abs{x}^\alpha $ satisfies $ \mu^{ L_c }_{d+1} < \lambda^{ L_c }_1 $. 
\end{example}

\section{The Friedlander--Filonov inequality for inhomogeneous membranes and divergence form operators}
    \label{sec: frifi}
In this section we aim to generalize the Friedlander--Filonov inequality $ \mu_{k+1} < \lambda_k $, valid for the Laplace operator, to more general elliptic operators. 
In a similar direction, the article~\cite{mazzeo91} gives certain conditions on the geometry of open manifolds for the Laplacian on any compact subdomain to satisfy the inequality $ \mu_{k+1} \leq \lambda_k $, $ k \in \N $, and provides examples where the inequality does not hold. 
Let us also mention the article~\cite{hansson08}, which investigates the question for the three-dimensional Heisenberg Laplacian, and the recent preprint~\cite{FHL24} for Carnot groups. 

Both Friedlander's and Filonov's proofs for the standard Laplacian $ -\Delta $ rely on the fact that complex exponentials $ w(x) = \exp ( \i \xi x ) $ with $ \xi \in \R^d $ solve 
\begin{align}
    -\Delta w &= \abs{ \xi }^2 w \, , \\
    \abs{ \nabla w }^2 
    &= \abs{ \xi }^2 \abs{w}^2 \, , 
\end{align}
hence they satisfy the conditions~\eqref{eq: def abstract weak sol ev eq} and~\eqref{eq: form bound for w} from Proposition~\ref{prop: abstract filonov}. 

For non-constant coefficients, we take inspiration from the Laplacian case and make the ansatz $ w = \exp ( i h(x) ) $ with some function $ h : \Omega \to \C $ to derive conditions on the operator $ L $ such that the requirements of Proposition~\ref{prop: abstract filonov} are satisfied.

\subsection{Inhomogeneous membrane operators}
    \label{subsec: harmgrad densities}
Recall Assumption~\ref{ass: reg of rho} and that $ \Omega \subseteq \R^d $ is a bounded Lipschitz domain. 
We first prove that, under a \emph{harmonic gradient} condition on $ \rho $, the inhomogeneous membrane operator $ L = -\tfrac{1}{ \rho } \Delta $ satisfies the Friedlander--Filonov inequality. 
Later we compare this result to an observation from complex analysis.  
\begin{theorem}
    \label{thm: harm grad density}
    Consider the inhomogeneous membrane operator $ L= -\tfrac{1}{ \rho } \Delta $ on $ \Omega $ and assume that there is a harmonic function $ h : \Omega \to \R $ such that the density $ \rho $ is given by $ \rho = \abs{ \nabla h }^2 $. 
    Then the inequality $ \mu^L_{k+1} \leq \lambda^L_k $ holds for all $ k \in \N $. 
\end{theorem}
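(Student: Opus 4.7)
The plan is to invoke Proposition~\ref{prop: abstract filonov}\emph{(i)} with the standard choices $ H = \L^2_\rho ( \Omega ) $, $ H_0 = \H^1_{ \rho , 0 } ( \Omega ) $, $ H_1 = \H^1_\rho ( \Omega ) $, taking $ \lambda = \lambda^L_k $ and $ r = 1 $. Following the ansatz sketched in the introduction of Section~\ref{sec: frifi}, my trial subspace will be the complex line $ W = \C \cdot w $ generated by
\begin{equation*}
    w = \exp \bigl( \i \, \sqrt{ \lambda^L_k } \, h \bigr ) .
\end{equation*}
Since $ V \equiv 0 $, the eigenvalue $ \lambda^L_k $ is strictly positive, so the exponent is purely imaginary and $ \abs{w} \equiv 1 $ pointwise in $ \Omega $.

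Harmonicity of $ h $ is exactly what makes this ansatz work. Because $ h $ is real-analytic in $ \Omega $, a direct pointwise computation using both $ \Delta h = 0 $ and $ \rho = \abs{ \nabla h }^2 $ gives
\begin{equation*}
    -\Delta w = \lambda^L_k \, \rho \, w
    \qquad \text{and} \qquad
    \abs{ \nabla w }^2 = \lambda^L_k \, \rho \, \abs{w}^2 .
\end{equation*}
Boundedness of $ \rho $ together with the second identity shows $ \nabla w \in \L^\infty ( \Omega ) $ and hence $ w \in \H^1_\rho ( \Omega ) $; integrating the same identity over $ \Omega $ delivers the form bound~\eqref{eq: form bound for w} with equality. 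Multiplying the first identity by $ \bar u $ for $ u \in \Cont_c^\infty ( \Omega ) $, integrating by parts (no boundary contribution by compact support), and extending to $ u \in \H^1_{ \rho , 0 } ( \Omega ) $ by density delivers the weak eigenvalue equation~\eqref{eq: def abstract weak sol ev eq}.

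The remaining hypothesis is $ W \cap H_0 = \{ 0 \} $, i.e., $ w \notin \H^1_{ \rho , 0 } ( \Omega ) $. Here I would observe that $ \abs{w}^2 \equiv 1 $ holds as an $ \H^1 ( \Omega ) \cap \L^\infty ( \Omega ) $-function, so by the product rule for traces the boundary trace satisfies $ \abs{ w |_{ \d \Omega } } \equiv 1 $ on $ \d \Omega $, forcing $ w \notin \H^1_0 ( \Omega ) $. Proposition~\ref{prop: abstract filonov}\emph{(i)} then yields the claimed inequality $ \mu^L_{k+1} \leq \lambda^L_k $. The only real creative step is the choice of ansatz; once it is fixed, every verification is routine. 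Note that~\eqref{eq: form bound for w} holds here with equality, and the ansatz produces only finitely many linearly independent trial functions, so neither part~\emph{(ii)} nor part~\emph{(iii)} of Proposition~\ref{prop: abstract filonov} can be invoked, and only the non-strict inequality asserted in the theorem is obtained.
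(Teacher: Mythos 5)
Your proposal is correct and follows essentially the same route as the paper: the same abstract framework from Proposition~\ref{prop: abstract filonov}\emph{(i)} with the trial function $ w = \exp ( \i \sqrt{ \lambda^L_k } \, h ) $, whose harmonicity-based identities $ -\Delta w = \lambda^L_k \rho w $ and $ \abs{ \nabla w }^2 = \lambda^L_k \rho \abs{w}^2 $ are exactly the paper's computation. Your extra details (using $ \rho \in \L^\infty $ to get $ w \in \H^1_\rho ( \Omega ) $, and the unimodularity/trace argument for $ w \notin \H^1_{ \rho , 0 } ( \Omega ) $) simply make explicit what the paper states without comment.
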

\begin{proof}
    In the notation of Proposition~\ref{prop: abstract filonov}, we set again $ H = \L^2_\rho ( \Omega ) $, $ H_1 = \H^1_\rho ( \Omega ) $, $ H_0 = \H^1_{ \rho , 0 } ( \Omega ) $, $ L_1 = L_\text{N}  $ and $ L_0 = L_\text{D} $. 
    For $ \mu >0 $ the function $ w = \exp ( \i \sqrt{ \mu } h ) $ lies in $ \H^1_\rho ( \Omega ) \setminus \H^1_{ \rho, 0 } ( \Omega ) $ and satisfies the differential equation 
    \begin{equation}
        -\Delta w = ( -\i \sqrt{ \mu } \Delta h 
        + \mu \abs{ \nabla h }^2 ) \, \e^{ \i \sqrt{ \mu } h } 
        = \mu \rho w \, , 
    \end{equation}
    as well as the equality  
    \begin{equation}
        \int_\Omega \abs{ \nabla w }^2 
        = \int_\Omega \abs{ \i \sqrt{ \mu } \nabla h }^2 
        \abss{ \e^{ \i \sqrt{ \mu } h } }^2 
        = \mu \int_\Omega \rho \abs{w}^2 \, .
    \end{equation}
    Therefore Proposition~\ref{prop: abstract filonov}\emph{(i)} gives the assertion.
\end{proof}
The condition from Theorem~\ref{thm: harm grad density} is rather restrictive: $ \rho = \abs{ \nabla h }^2 $ for some harmonic $ h $ implies that $ \rho $ is smooth and subharmonic, i.e., $ -\Delta \rho \leq 0 $. But in general the condition seems difficult to verify. In the case of planar domains, we give an alternative equivalent condition in Proposition~\ref{prop: log harm is harm grad}. 
\begin{example}
    \label{ex: rho is green}
    Green's function $ G(x) = c_d \abs{x}^{2-d} $ (for $ d \geq 3 $) or $ G(x) = c_2 \log \abs{x} $ (for $ d=2 $) is harmonic in $ \R^d \setminus \{ 0 \} $, so the inhomogeneous membrane operator $ L = -\tfrac{1}{ \rho } \Delta $ with density $ \rho (x) = \abs{x}^{2-2d} = \tilde{c}_d \abs{ \nabla G (x) }^2 $ satisfies the inequality $ \mu^L_{k+1} \leq \lambda^L_k $, $ k \in \N $, on any domain $ \Omega \subseteq \R^d $ away from the origin. 
\end{example}
\begin{example}
    \label{ex: rho harm grad}
    If the dimension $ d $ is even, then the density $ \rho (x) = \abs{x}^2 $ is given by $ \abs{ \nabla h }^2 $ where $ h(x) = ( x_1^2 - x_2^2 + \ldots - x_d^2 )/ 2 $ (or $ h(x) = x_1 x_2 + x_3 x_4 + \ldots $) is harmonic, hence $ L = -\tfrac{1}{ \abs{x}^2 } \Delta $ satisfies the Friedlander--Filonov inequality on any domain away from the origin. 
    In odd space dimensions $ d $ however, there is no harmonic $ h : \R^d \to \R $ with $ \abs{ \nabla h }^2 = \abs{x}^2 $: The growth condition on $ \nabla h $ would imply that $ h $ is a quadratic polynomial, so $ h(x) = x^t A x + b^t x + c $ for some $ c \in \R $, $ b \in \R^d $ and symmetric $ A \in \R^{ d \times d } $, and one easily shows $ b = 0 $, $ \tr A = 0 $ and $ A^2 = \operatorname{Id} $, which is not possible if $ d $ is odd. 
    
\end{example}
In both examples above, the densities are also convex, so the results can be compared with the ones in Section~\ref{sec: lw ineq}. Note however that here, the domain $ \Omega $ does not have to be convex. \\
\paragraph{The special case of planar domains} 
Let us give some more context for Theorem~\ref{thm: harm grad density} in the two-dimensional case.
Planar domains can be seen as open connected subsets of $ \C $ in a natural way. 
The Riemann mapping theorem asserts that a bounded simply connected domain $ \Omega \subseteq \R^2 $ can be mapped conformally to any other.  
Let $ \phi : \Omega' \to \Omega $ be a conformal transformation, that is, a holomorphic bijection with holomorphic inverse. 
A simple computation shows that, for $ u : \Omega \to \R $ and $ v = u \circ \phi : \Omega' \to \R $, there holds 
\begin{equation}
    \label{eq: delta under conformal}
    \Delta v = \abs{ \phi' }^2 ( \Delta u ) \circ \phi \, . 
\end{equation}
This means that the map $ \phi $ transforms the Dirichlet eigenvalue problem for the standard Laplacian on the domain $ \Omega $ to the eigenvalue problem 
\begin{equation}
    \left\{
    \begin{aligned}
        -\Delta v &= \lambda \abs{ \phi' }^2 v 
        \, \ & \text{ in } \Omega' \, , \\
        v &= 0 \, \ 
        & \text{ on } \d \Omega' \, . 
    \end{aligned}
    \right.
\end{equation}
Assuming that both $ \Omega $ and $ \Omega' $ have regular boundaries\footnote{more precisely, we suppose that $ \phi $ is holomorphic up to the boundary, see~\cite{pommerenke92} for details on the boundary behaviour of conformal maps} 
and because conformal maps preserve angles, $ \phi $ also transforms the Neumann eigenvalue problem for the standard Laplacian on $ \Omega $ into the eigenvalue problem 
\begin{equation}
    \left\{
    \begin{aligned}
        -\Delta v &= \mu \abs{ \phi' }^2 v 
        \, \ & \text{ in } \Omega' \, , \\
        \d_\nu v &= 0 \, \ 
        & \text{ on } \d \Omega' \, . 
    \end{aligned}
    \right.
\end{equation}
In other words, the Dirichlet and the Neumann realizations of the Laplacian $ -\Delta^\Omega $ on $ \Omega $ have the same spectrum as the inhomogeneous membrane operator $ L = -\tfrac{1}{ \rho } \Delta^{ \Omega' } $ with density $ \rho = \abs{ \phi' }^2 $ on $ \Omega' $ with Dirichlet resp.\ Neumann boundary conditions. 
In particular, the strict Friedlander--Filonov inequality $ \mu^L_{k+1} < \lambda^L_k $ for all $ k $ follows directly from the corresponding inequality for the usual Laplacian $ -\Delta^\Omega $. 

Let us relate this to Theorem~\ref{thm: harm grad density}. 
If $ \phi : \Omega' \to \Omega $ is conformal, then $ h = \Re \phi : \Omega \to \R $ is harmonic and $ \abs{ \phi' }^2 = \abs{ \nabla h }^2 $, so $ \rho = \abs{ \phi' }^2 $ fulfills the harmonic gradient condition. 
Theorem~\ref{thm: harm grad density} can thus be seen as a generalization to dimensions $ d \geq 3 $ of this observation.

But even in two dimensions, Theorem~\ref{thm: harm grad density} is more general than the observation via the Riemann mapping theorem. 
Indeed, if $ \phi : D \to \C $ is holomorphic with non-vanishing derivative (but not necessarily injective), then on any Lipschitz domain $ \overline{ \Omega } \subseteq D $, the weight $ \rho = \abs{ \phi' }^2 $ satisfies Assumption~\ref{ass: reg of rho} and the harmonic gradient condition, and so the operator $ L = -\tfrac{1}{ \rho } \Delta^\Omega $ satisfies the Friedlander--Filonov inequality.
\begin{example}
    Let $ \Omega \subseteq \R^2 $ be a bounded simply connected Lipschitz domain with positive distance to the origin. 
    Then the weight $ \rho ( x_1 , x_2 ) = \exp ( \tfrac{ x_1 }{ x_1^2 + x_2^2 } ) $ satisfies the harmonic gradient condition on $ \Omega $.
    Indeed, identifying $ z = x_1 + \i x_2 $, we note $ \rho ( x_1, x_2 ) = \abss{ \e^{ 1/(2z) } }^2 $, and $ z \to \e^{ 1/(2z) } $ admits a primitive function on any simply connected domain.
    Thus, by Theorem~\ref{thm: harm grad density}, the operator $ L = -\tfrac{1}{ \rho } \Delta $ satisfies the inequality $ \mu^L_{k+1} \leq \lambda^L_k $ for all $ k \in \N $. 
    Note that there are simply connected domains $ \Omega \subseteq \C $ where the holomorphic map $ z \mapsto \e^{ 1/ (2z) } $ is not injective.
\end{example}
We note that, on simply connected planar domains, complex analysis translates the harmonic gradient condition into log-harmonicity, which is much simpler to verify. 
\begin{proposition}
    \label{prop: log harm is harm grad} 
    Let $ \Omega \subseteq \R^2 $ be a simply connected domain and $ \rho : \Omega \to (0, \infty ) $ a function. There exists a harmonic function $ h : \Omega \to \R $ with $ \rho = \abs{ \nabla h }^2 $ if and only if 
    \begin{equation}
        \Delta \log \rho = 0 \quad \text{ in } \Omega \, . 
    \end{equation}
    In this case, the family $ \{ \exp ( \i h ) : h \text{ is harmonic with } \rho = \abs{ \nabla h }^2 \} $ spans an infinite-dimensional vector space. 
\end{proposition}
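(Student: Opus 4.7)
The plan is to use the bridge between real harmonic functions on a simply connected planar domain and holomorphic functions (via harmonic conjugates and primitives) to reformulate the condition $\rho = \abs{\nabla h}^2$ as the existence of a holomorphic $\phi$ on $\Omega$ with $\abs{\phi'}^2 = \rho$, and then to recognize the harmonicity of $\log \rho$ as the obstruction to constructing such a $\phi$. For the necessity direction, I would start with a harmonic $h$ satisfying $\abs{\nabla h}^2 = \rho$, pick a harmonic conjugate $\tilde h$ (available since $\Omega$ is simply connected), and form the holomorphic $\phi = h + \i \tilde h$. The Cauchy--Riemann equations give $\abs{\phi'}^2 = \abs{\nabla h}^2 = \rho$; since $\rho > 0$, the holomorphic function $\phi'$ is nowhere vanishing, hence admits a single-valued holomorphic logarithm on the simply connected $\Omega$, so that $\log \abs{\phi'}$ is harmonic and therefore $\log \rho = 2 \log \abs{\phi'}$ is harmonic as well.

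For the converse, I would set $u := \tfrac12 \log \rho$, which is harmonic by assumption, take a harmonic conjugate $v$, and form the holomorphic function $f = u + \i v$. Then $\e^f$ is a nowhere vanishing holomorphic function on $\Omega$ with $\abs{\e^f}^2 = \rho$, and simple connectedness of $\Omega$ guarantees the existence of a holomorphic primitive $\phi$ with $\phi' = \e^f$. Setting $h := \Re \phi$ produces a harmonic function with $\abs{\nabla h}^2 = \abs{\phi'}^2 = \rho$, completing the equivalence.

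For the infinite-dimensional spanning claim, I would fix the primitive $\phi$ just constructed and, for each $\alpha \in \R$, consider $h_\alpha := \Re(\e^{\i \alpha} \phi)$, which is harmonic with $\abs{\nabla h_\alpha}^2 = \rho$, so that $\exp(\i h_\alpha)$ belongs to the family. The main obstacle will be to verify that $\{\exp(\i h_\alpha) : \alpha \in [0, 2\pi)\}$ contains infinitely many linearly independent functions. Because $\phi'$ is nowhere vanishing, $\phi$ is a local biholomorphism at some point $z_0 \in \Omega$, mapping a neighborhood $U \subseteq \Omega$ biholomorphically onto some open set $V \subseteq \C$. Composing with this local inverse turns a putative relation $\sum_{j=1}^N a_j \exp(\i h_{\alpha_j}) \equiv 0$ on $U$ into
\[
    \sum_{j=1}^N a_j \exp \bigl( \i ( x \cos \alpha_j - y \sin \alpha_j ) \bigr) = 0 \quad \text{for all } (x,y) \in V \, .
\]
This is a finite linear combination of real-analytic characters of $\R^2$ with pairwise distinct frequency vectors $(\cos \alpha_j, -\sin \alpha_j) \in S^1$, which by real-analytic continuation extends to an identity on all of $\R^2$; standard linear independence of characters with distinct frequencies (for instance, by inspecting all mixed partial derivatives in $(x,y)$ at the origin to obtain a generalized Vandermonde system) then forces each $a_j$ to vanish.
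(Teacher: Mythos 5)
Your proof is correct and follows essentially the same route as the paper: both directions are handled by passing to a holomorphic $\phi$ with $\abs{\phi'}^2=\rho$ via harmonic conjugates, holomorphic logarithms and primitives on the simply connected domain, and the infinite-dimensionality comes from the same family $h_\theta=\Re(\e^{\i\theta}\phi)$. Your justification of the linear-independence step (pulling back through a local inverse of $\phi$, which exists since $\phi'$ never vanishes, and invoking independence of exponential characters with distinct frequencies) is sound and in fact supplies the detail that the paper only asserts.
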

\begin{proof}
    Suppose $ \rho = \abs { \nabla h }^2 $ with $ h $ harmonic. 
    Since $ \Omega $ is simply connected, $ h $ admits a harmonic conjugate, so there is a holomorphic function $ \phi : \Omega \to \C $ with $ h = \Re \phi $. 
    Because $ \abs{ \phi' }^2 = \abs{ \nabla h }^2 = \rho $ does not vanish on $ \Omega $, there is a holomorphic $ \Psi $ such that $ \phi' = \exp \Psi $ (by simple connectedness again). So there holds 
    \begin{equation}
        \rho = \abs{ \nabla h }^2 
        = \abs{ \phi' }^2 
        = \abs{ \exp \Psi }^2 
        = \e^{ 2 \Re \Psi } \, .
    \end{equation}
    Since $ \Re \Psi $ is harmonic, $ \log \rho $ is harmonic. 

    Conversely, suppose that $ \rho = \e^g $ with a harmonic $ g : \Omega \to \R $. As before, we have $ g = \Re \phi $ for some holomorphic $ \phi $ on $ \Omega $. 
    By simple connectedness, the holomorphic function $ \e^{ \phi /2 } $ has a primitive $ \Psi $, and we compute 
    \begin{equation}
        \rho = \e^g = \e^{ \Re \phi }
        = \abss{ \e^{ \phi /2} }^2 
        = \abss{ \Psi' }^2 
        = \abss{ \nabla \Re \Psi }^2 \, , 
    \end{equation}
    where $ h = \Re \Psi $ is harmonic on $ \Omega $. 

    The last assertion is a consequence of the fact that, if $ h = \Re \phi $ is harmonic with $ \rho = \abs{ \nabla h }^2 $, then we get many other harmonic functions via $ h_\theta = \Re ( \e^{ \i \theta } \phi ) $ for $ \theta \in [0, 2 \pi ) $, all of which fulfill $ \rho = \abs{ \nabla h }^2 $. 
    It is not difficult to verify that most of them are linearly independent: using induction over the size of the linear combination, one can show that the family 
    \begin{equation}
        \label{eq: lin indep family}
        \sett{ \e^{ \alpha \Re \phi + \beta \Im \phi } 
        : \alpha , \beta \in \C , \, \alpha^2 + \beta^2 \neq 0 } 
    \end{equation} 
    is linearly independent.
\end{proof}
\begin{corollary}
    \label{cor: strict frifi}
    Assume $ \Omega \subseteq \R^2 $ is a simply connected planar domain and that the density $ \rho $ satisfies $ \Delta \log \rho = 0 $. Then the inhomogeneous membrane operator $ L = -\tfrac{1}{ \rho } \Delta $ on $ \Omega $ satisfies the strict inequality $ \mu^L_{k+1} < \lambda^L_k $ for all $ k \in \N $.
\end{corollary}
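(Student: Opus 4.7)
The plan is to invoke the strict-inequality statement Proposition~\ref{prop: abstract filonov}\emph{(iii)} with $r=1$, in the setting $H = \L^2_\rho(\Omega)$, $H_0 = \H^1_{\rho,0}(\Omega)$, $H_1 = \H^1_\rho(\Omega)$, $L_0 = L_\text{D}$, $L_1 = L_\text{N}$ and $\lambda = \lambda^L_k$. Two ingredients will be needed: the absence of a common eigenfunction for $L_\text{D}$ and $L_\text{N}$, and an infinite linearly independent family inside $\H^1_\rho(\Omega) \setminus \H^1_{\rho,0}(\Omega)$ of solutions to~\eqref{eq: def abstract weak sol ev eq} (with $\lambda = \lambda^L_k$) that also obey the form bound~\eqref{eq: form bound for w}. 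The first ingredient is handed to us at once by Corollary~\ref{cor: no ef dir neum}, applied with the smooth coefficient matrix $A \equiv \id_{\R^d}$.

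For the trial family, I would first apply Proposition~\ref{prop: log harm is harm grad}: since $\Omega$ is simply connected with $\Delta \log \rho = 0$, there is a holomorphic map $\phi : \Omega \to \C$ with $\abs{\phi'}^2 = \rho$. For every $\theta \in [0, 2\pi)$ the harmonic rotate $h_\theta := \Re(\e^{\i\theta}\phi)$ then satisfies $\abs{\nabla h_\theta}^2 = \abs{\e^{\i\theta}\phi'}^2 = \rho$, and the unit-modulus function
\begin{equation*}
    w_\theta := \exp\bigl(\i\sqrt{\lambda^L_k}\, h_\theta\bigr)
\end{equation*}
lies in $\H^1_\rho(\Omega)$ but not in $\H^1_{\rho,0}(\Omega)$. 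The computation already carried out in the proof of Theorem~\ref{thm: harm grad density} (with $\mu = \lambda^L_k$) applies verbatim to give $-\Delta w_\theta = \lambda^L_k \rho w_\theta$ distributionally and $\abs{\nabla w_\theta}^2 = \lambda^L_k \rho \abs{w_\theta}^2$ pointwise, so that~\eqref{eq: def abstract weak sol ev eq} and~\eqref{eq: form bound for w} both hold (the latter with equality).

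The hard part is to extract from $\{w_\theta\}_{\theta \in [0,2\pi)}$ an infinite linearly independent subset. Writing $\phi = u + \i v$, I observe that
\begin{equation*}
    w_\theta = \exp\bigl(\alpha_\theta u + \beta_\theta v\bigr), \qquad \alpha_\theta = \i\sqrt{\lambda^L_k}\cos\theta, \quad \beta_\theta = -\i\sqrt{\lambda^L_k}\sin\theta,
\end{equation*}
so that $\alpha_\theta^2 + \beta_\theta^2 = -\lambda^L_k \neq 0$ (using $\lambda^L_k \geq \lambda^L_1 > 0$). The linear independence of the family~\eqref{eq: lin indep family} invoked inside the proof of Proposition~\ref{prop: log harm is harm grad} is exactly what is needed here, and yields infinitely many linearly independent $w_\theta$ as $\theta$ ranges over any sequence of distinct values in $[0,2\pi)$. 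All three hypotheses of Proposition~\ref{prop: abstract filonov}\emph{(iii)} are then in place and deliver the strict inequality $\mu^L_{k+1} < \lambda^L_k$ for every $k \in \N$.
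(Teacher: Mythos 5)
Your proposal is correct and follows essentially the same route as the paper, which proves the corollary by combining the trial functions and computation of Theorem~\ref{thm: harm grad density} (with $\mu = \lambda^L_k$) with the infinite-dimensionality statement in Proposition~\ref{prop: log harm is harm grad} and Proposition~\ref{prop: abstract filonov}\emph{(iii)}. You merely make explicit two details the paper leaves implicit: the absence of a common Dirichlet--Neumann eigenfunction via Corollary~\ref{cor: no ef dir neum} with $A \equiv \id$, and the fact that the rotated trial functions $w_\theta = \exp(\alpha_\theta u + \beta_\theta v)$ have $\alpha_\theta^2 + \beta_\theta^2 = -\lambda^L_k \neq 0$, so their linear independence is exactly the independence of the family~\eqref{eq: lin indep family}.
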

\begin{proof}
    This follows from the last assertion of Proposition~\ref{prop: log harm is harm grad} and Proposition~\ref{prop: abstract filonov}~(iii), in the same way as Theorem~\ref{thm: harm grad density}. 
\end{proof}
To illustrate how much simpler complex analysis makes things, we note that $ \Delta \log \rho = 0 $ implies $ \Delta \log \rho^\alpha = 0 $ for any $ \alpha \in \R $, and that the product of two log-harmonic functions is again log-harmonic. 
A priori this cannot be read off from the harmonic gradient condition from Theorem~\ref{thm: harm grad density}.

\begin{remark}
    \label{rem: nehari bandle}
    Corollary~\ref{cor: strict frifi} can be related to two results of Nehari \cite{nehari_58} resp.\ Bandle \cite{bandle_72}, who generalized the Faber-Krahn resp.\ the Szeg\accentH{o}-Weinberger inequality as follows. 
    Assume that $ \Omega \subseteq \R^2 $ is a simply connected domain and $ \rho : \Omega \to (0, \infty ) $ satisfies Assumption~\ref{ass: reg of rho} and is log-subharmonic, i.e., 
    \begin{equation}
        \label{eq: def rho log subharm} 
        -\Delta \log \rho \leq 0 \, 
        \quad \text{ in } \Omega \, . 
    \end{equation} 
    Consider the inhomogeneous membrane operator $ L = -\tfrac{1}{ \rho } \Delta $ on $ \Omega $. 
    Then $ \lambda^L_1 $ is at least as large as the first Dirichlet eigenvalue of the Laplacian on a disk $ \D $ of volume $ \int_\Omega \rho $ (see~\cite{nehari_58}). 
    Moreover, $ \mu^L_1 $ is no larger than the second Neumann eigenvalue of the Laplacian on that same disk (see~\cite{bandle_72}). In particular, there holds the Pólya-type inequality 
    \begin{equation}
        \mu^L_2 \leq \mu^{ -\Delta^\D }_2 
        < \lambda^{ -\Delta^\D }_1 
        \leq \lambda^L_1 \, . 
    \end{equation}
    Both their proofs rely on methods from complex analysis, hence they do not immediately generalize to higher dimensions. 
\end{remark}

\subsection{Divergence form operators}
    \label{subsec: frifi for div form}
Finally, we consider divergence form operators $ L = -\div A \nabla $ on a bounded Lipschitz domain $ \Omega \subseteq \R^d $, where $ A $ is a coefficient matrix satisfying Assumption~\ref{ass: reg of A}. 
The following result seems unhandy, but we prove below that it leads to a large class of coefficient matrices that satisfy the Friedlander--Filonov inequality. 

\begin{proposition}
    \label{prop: frifi for div A nabla}
    Suppose there is a map to the unit sphere $ \xi : \Omega \to S^{d-1} $ such that $ x \mapsto A^{-1/2} (x) \xi (x) $ is a gradient field and $ x \mapsto A^{1/2} (x) \xi (x) $ is divergence-free. 
    Then the eigenvalues of $ L= -\div A \nabla $ satisfy the inequality $ \mu^L_{k+1} \leq \lambda^L_k $ for all $ k \in \N $.
\end{proposition}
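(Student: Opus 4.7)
The approach is to apply Proposition~\ref{prop: abstract filonov}\emph{(i)} in the setting $ H = \L^2 ( \Omega ) $, $ H_0 = \H^1_0 ( \Omega ) $, $ H_1 = \H^1 ( \Omega ) $, $ \lambda = \lambda^L_k $, and $ r = 1 $, with the one-dimensional trial space $ W = \C w $ spanned by a generalization of Filonov's complex exponentials. The natural ansatz is
\begin{equation*}
    w = \exp \bigl( \i \sqrt{ \lambda^L_k } \, h \bigr) \, ,
\end{equation*}
where $ h : \Omega \to \R $ is a real potential of $ A^{-1/2} \xi $, that is, $ \nabla h = A^{-1/2} \xi $; such an $ h $ exists by the gradient-field hypothesis, and it is Lipschitz since $ A $ is uniformly elliptic and $ \abs{ \xi } = 1 $, so $ w \in \W^{1, \infty } ( \Omega ) \subseteq \H^1 ( \Omega ) $. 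Since $ \abs{w} \equiv 1 $, the trace of $ w $ does not vanish on $ \d \Omega $, whence $ W \cap \H^1_0 ( \Omega ) = \{ 0 \} $.

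It remains to verify~\eqref{eq: def abstract weak sol ev eq} and~\eqref{eq: form bound for w}. Setting $ \mu = \lambda^L_k $, direct differentiation gives
\begin{equation*}
    \nabla w = \i \sqrt{ \mu } \, A^{-1/2} \xi \, w \, ,
    \qquad
    A \nabla w = \i \sqrt{ \mu } \, A^{1/2} \xi \, w \, ,
\end{equation*}
and combining these with the identity $ A^{1/2} \xi \cdot A^{-1/2} \xi = \abs{ \xi }^2 = 1 $ yields the pointwise equality $ A \nabla w \cdot \overline{ \nabla w } = \mu \abs{w}^2 $, which upon integration gives even equality in~\eqref{eq: form bound for w}. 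For~\eqref{eq: def abstract weak sol ev eq}, I would pick $ u \in \H^1_0 ( \Omega ) $ and note that $ w \bar{u} \in \H^1_0 ( \Omega ) $; the weak divergence-free assumption $ \int_\Omega A^{1/2} \xi \cdot \nabla \phi = 0 $ for all $ \phi \in \H^1_0 ( \Omega ) $ then legitimises the integration by parts
\begin{equation*}
    \int_\Omega A \nabla w \cdot \nabla \bar{u}
    = \i \sqrt{ \mu } \int_\Omega w \, A^{1/2} \xi \cdot \nabla \bar{u}
    = -\i \sqrt{ \mu } \int_\Omega \bar{u} \, A^{1/2} \xi \cdot \nabla w
    = \mu \int_\Omega w \bar{u} \, ,
\end{equation*}
where the last step again uses $ A^{1/2} \xi \cdot A^{-1/2} \xi = 1 $.

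With both hypotheses verified, Proposition~\ref{prop: abstract filonov}\emph{(i)} yields the desired inequality $ \mu^L_{k+1} \leq \lambda^L_k $. The only mild subtlety is justifying the integration by parts in the merely $ \L^\infty $-framework for the vector field $ A^{1/2} \xi $, which reduces to noting that $ w \bar{u} \in \H^1_0 ( \Omega ) $ is a valid test function against the distributional divergence of $ A^{1/2} \xi $; beyond this routine check I do not anticipate any substantial obstacle.
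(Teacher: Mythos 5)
Your proposal is correct and follows essentially the same route as the paper: the same ansatz $ w = \exp ( \i \sqrt{ \mu } \, h ) $ with $ \nabla h = A^{-1/2} \xi $, the same pointwise computations giving $ -\div A \nabla w = \mu w $ (weakly) and $ \int_\Omega A \nabla w \cdot \nabla \bar{w} = \mu \int_\Omega \abs{w}^2 $, and the conclusion via Proposition~\ref{prop: abstract filonov}\emph{(i)}. Your additional remarks — that $ \abs{w} \equiv 1 $ forces $ W \cap \H^1_0 ( \Omega ) = \{ 0 \} $, and that the divergence-free condition is used weakly by testing against $ w \bar{u} \in \H^1_0 ( \Omega ) $ — are just careful spellings-out of steps the paper leaves implicit.
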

\begin{proof}
    Let $ h: \Omega \to \R $ be a function with $ \nabla h = A^{-1/2} \xi $, and fix $ \mu >0 $.
    Then $ w = \e^{ \i \sqrt{ \mu } h } $ is in $ \H^1 ( \Omega ) \setminus \H^1_{ \rho , 0 } ( \Omega ) $ and we verify  
    \begin{equation}
        -\div A \nabla w = ( -\i \sqrt{ \mu } \div A \nabla h 
        + \mu A \nabla h \cdot \nabla h ) \e^{ \i \sqrt{ \mu } h } 
        = ( -\i \sqrt{ \mu } \div A^{1/2} \xi + \mu \abs{ \xi } ) w 
        = \mu w \, , 
    \end{equation}
    as well as 
    \begin{equation}
        \int_\Omega A \nabla w \cdot \nabla \bar{w} 
        = \mu \int_\Omega A A^{-1/2} \xi \cdot A^{-1/2} \xi 
        \abs{w}^2
        = \mu \int_\Omega \abs{w}^2 \, .
    \end{equation}
    So the assertion follows from Proposition~\ref{prop: abstract filonov}.
\end{proof}
If $ \Omega $ is simply connected, the conditions in Proposition~\ref{prop: frifi for div A nabla} read 
\begin{equation}
    \label{eq: div and curl for A}
    \abs{ \xi } \equiv 1 \quad \text{ and }
    \quad \div A^{1/2} \xi = 0 \quad \text{ and } 
    \quad \curl A^{-1/2} \xi = 0 \, . 
\end{equation}
Here, $ \curl $ denotes the differential operator sending a vector field $ F $ to the $ d(d-1)/2 $-dimensional vector with entries $ ( \d_j F_i - \d_i F_j )_{ 1 \leq i<j \leq d } $ (this corresponds to a $ 2 $-form), generalizing the $ 2 $- and $ 3 $-dimensional $ \curl $-operators. 

In general it does not seem easy to determine whether solutions to this problem exist. 
However if there exists a fixed vector $ \xi \in S^{d-1} $ such that both $ A^{1/2} (x) \xi $ and $ A^{-1/2} \xi $ are constant on $ \Omega $, then clearly~\eqref{eq: div and curl for A} is fulfilled. 
This leads to a concrete class of coefficient matrices $ A $ such that $ L = -\div A \nabla $ satisfies the Friedlander--Filonov inequality.
\begin{theorem}
    \label{thm: thm frifi for A}
    Consider the differential operator $ L = -\div A \nabla $, and suppose that $ A $ has a constant eigenvalue $ \lambda > 0 $ with constant eigenvector $ \xi \in \R^d \setminus \{ 0 \} $, that is
    \begin{equation}
        \label{eq: cst eigenpair}
        A(x) \xi = \lambda \xi \quad \text{ for all } x \in \Omega \, . 
    \end{equation}
    Then $ \mu^L_{k+1} \leq \lambda^L_k $ holds for all $ k \in \N $. 
    If $ A $ has two constant eigenvalues with linearly independent constant eigenvectors and if $ A $ is smooth, then $ \mu^L_{k+1} < \lambda^L_k $ for all $ k \in \N $.
\end{theorem}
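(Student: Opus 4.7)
The strategy is to deduce the non-strict inequality from Proposition~\ref{prop: frifi for div A nabla} and the strict inequality directly from Proposition~\ref{prop: abstract filonov}\emph{(iii)}, applied (as in Subsection~\ref{subsec: frifi for div form}) with $H=\L^2(\Omega)$, $H_0=\H^1_0(\Omega)$, $H_1=\H^1(\Omega)$ and $\lambda=\lambda^L_k$. For the first assertion I would normalize $\hat\xi=\xi/\abs{\xi}$; this is a constant unit vector field on $\Omega$, and since $A(x)\hat\xi=\lambda\hat\xi$ holds pointwise, both $A^{1/2}\hat\xi=\sqrt{\lambda}\,\hat\xi$ and $A^{-1/2}\hat\xi=\lambda^{-1/2}\hat\xi$ are constant vector fields. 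The first is trivially divergence-free and the second is the gradient of $x\mapsto\lambda^{-1/2}\hat\xi\cdot x$, so the hypotheses of Proposition~\ref{prop: frifi for div A nabla} are met and yield $\mu^L_{k+1}\leq\lambda^L_k$ for every $k\in\N$.

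For the strict inequality, the plan is to construct infinitely many linearly independent plane-wave trial functions. Concretely, for any constant vector $\alpha\in\R^d$ such that $A\alpha$ is constant on $\Omega$, the function $w_\alpha(x)=\e^{\i\alpha\cdot x}$ lies in $\H^1(\Omega)\setminus\H^1_0(\Omega)$ and a short computation using $\div(A\alpha)=0$ shows that
\begin{equation*}
    -\div A\nabla w_\alpha = (A\alpha\cdot\alpha)\,w_\alpha
    \qquad\text{and}\qquad
    \int_\Omega A\nabla w_\alpha\cdot\nabla\overline{w_\alpha}
    = (A\alpha\cdot\alpha)\int_\Omega\abs{w_\alpha}^2.
\end{equation*}
Let $(\lambda_1,\xi_1),(\lambda_2,\xi_2)$ denote the two constant eigenpairs. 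Taking $\alpha=a\xi_1+b\xi_2$ with $a,b\in\R$, the assumption ensures that $A\alpha=a\lambda_1\xi_1+b\lambda_2\xi_2$ is again constant, and I would impose the single scalar constraint $A\alpha\cdot\alpha=\lambda^L_k$. When $\lambda_1\neq\lambda_2$, symmetry of $A$ forces $\xi_1\perp\xi_2$ and the constraint becomes $a^2\lambda_1\abs{\xi_1}^2+b^2\lambda_2\abs{\xi_2}^2=\lambda^L_k$; when $\lambda_1=\lambda_2=\lambda$ it reduces to $\lambda\abs{\alpha}^2=\lambda^L_k$ on $\spann\{\xi_1,\xi_2\}$. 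In either case one obtains a one-parameter family of admissible $\alpha$'s, and hence of functions $w_\alpha$ satisfying~\eqref{eq: def abstract weak sol ev eq} and~\eqref{eq: form bound for w} with equality.

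Smoothness of $A$ together with Corollary~\ref{cor: no ef dir neum} rules out a common Dirichlet--Neumann eigenfunction of $L$, so Proposition~\ref{prop: abstract filonov}\emph{(iii)} applies and yields the strict inequality $\mu^L_{k+1}<\lambda^L_k$. The main technical point I anticipate is the linear independence of the infinite family $\{w_\alpha\}$ for distinct $\alpha$'s; this should follow from a standard induction argument on the number of summands in a vanishing finite linear combination (differentiating in a direction along which the $\alpha_j$'s differ and subtracting a suitable multiple of the relation), or equivalently from the injectivity of the Fourier transform on finite sums of Dirac masses extended by real-analyticity from $\Omega$ to $\R^d$.
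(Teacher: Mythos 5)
Your proposal is correct and follows essentially the same route as the paper: the non-strict inequality via Proposition~\ref{prop: frifi for div A nabla} applied to the constant unit eigenvector, and the strict inequality via an infinite family of plane-wave trial functions supported in the span of the two constant eigendirections, combined with Corollary~\ref{cor: no ef dir neum} and Proposition~\ref{prop: abstract filonov}\emph{(iii)}. Your parametrization by $\alpha$ with the constraint $A\alpha\cdot\alpha=\lambda^L_k$ coincides (up to relabelling $\alpha=\sqrt{\mu}\,A^{-1/2}\xi$, $\xi\in E\cap S^{d-1}$) with the family $\{\exp(\i\sqrt{\mu}\,A^{-1/2}\xi\cdot x)\}$ used in the paper, and your sketch of the linear-independence argument is a welcome detail the paper only asserts.
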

\begin{proof}
    Let $ \lambda > 0 $ and $ \xi \in \R^d $ with $ \abs{ \xi } = 1 $ and suppose that~\eqref{eq: cst eigenpair} holds. Then also $ x \to A^{1/2} (x) \xi $ and $ x \to A^{-1/2} (x) \xi $ are constant on $ \Omega $. 
    Hence $ A^{-1/2} (x) \xi $ is a gradient field and $ A^{1/2} (x) \xi $ is divergence-free, so Proposition~\ref{prop: frifi for div A nabla} yields the first assertion. 
    
    If $ \eta , \zeta \in \R^{d} $ are two linearly independent such constant eigenvectors and $ E = \spann \{ \eta , \zeta \} \subseteq \R^{d} $, then each $ \xi \in E \cap S^{d-1} $ satisfies~\eqref{eq: div and curl for A}. 
    Moreover the family 
    \begin{equation}
        \label{eq: family inf dim div A grad}
        \sett{ x \mapsto \exp ( A^{-1/2} \xi \cdot x ) : 
        \xi \in E \cap S^{d-1} } 
    \end{equation}
    has infinite dimension. 
    The smoothness of $ A $ guarantees the unique continuation property for $ L $, hence $ L_\text{D} $ and $ L_\text{N}$ do not have a common eigenfunction (see Corollary~\ref{cor: no ef dir neum}). 
    Thus the strict inequality $ \mu^L_{k+1} < \lambda^L_k $ follows like Proposition~\ref{prop: frifi for div A nabla} by Proposition~\ref{prop: abstract filonov}\emph{(iii)}.
\end{proof}
Theorem~\ref{thm: thm frifi for A} should be compared with Theorem~\ref{thm: lw when grad has low dim}. 
In both cases, the coefficients of the elliptic operator $ L $ are constant in certain directions. 
This makes $ L $ sufficiently close to the Laplacian, allowing for certain arguments to carry over. 
\begin{example}
    \label{ex: frifi for A}
    In one space dimension, Theorem~\ref{thm: thm frifi for A} only applies to scalar multiples of $ L = -\ddxtwo $. 
    For $ d \geq 2 $, we get the (non-strict) Friedlander--Filonov inequality for matrices 
    \begin{equation}
        A(x) = 
        \begin{bmatrix}
            1 & 0 \\
            0 & A_1 (x) 
        \end{bmatrix}
        \, , 
    \end{equation}
    and strict inequality for coefficient matrices of the form
        \begin{equation}
        A(x) = 
        \begin{bmatrix}
            a & b & 0 \\
            b & d & 0 \\
            0 & 0 & A_2 (x)
        \end{bmatrix}
        \, .
    \end{equation}
    Here the matrix $ 
    \begin{bmatrix}
        a & b \\
        b & d 
    \end{bmatrix} 
    \in \R^{ 2 \times 2 } $
    is constant and positive definite, the coefficients $ A_1 (x) \in \R^{ (d-1) \times (d-1) } $ and $ A_2 (x) \in \R^{ (d-2) \times (d-2) } $ satisfy Assumption~\ref{ass: reg of A}, and $ A_2 $ is smooth (so that the operator $ L = -\div  A \nabla $ satisfies the unique continuation property).
\end{example}

\appendix

\section{Proof of the Lemma~\ref{lem: ibp convex}}
    \label{sec: proof ibp formula}

In this section we give a proof for Lemma~\ref{lem: ibp convex} with references. 
In~\cite{levine_weinberger} it was proven for smooth domains and functions. We closely follow the lines of~\cite{levine_weinberger} and make sure that their arguments continue to hold in piecewise smooth domains.

Let $ \Omega \subseteq \R^d $ be piecewise smooth and denote again by $ \Sigma \subseteq \d \Omega $ the irregular part of the boundary (see Definition~\ref{def: piecewise smooth domain}). 
The unit normal $ \nu $ is defined on $ \d \Omega \setminus \Sigma $ and admits a smooth extension $ \tilde{ \nu } : C \to S^{d-1} $ for any compact set $ C \subseteq \R^d \setminus \Sigma $ (see~\cite{weingarten_map}, or~\cite[Lemma~6.38]{gilbarg_trudinger} for extending smooth functions defined on the boundary). We define the curvature matrix $ K : \d \Omega \setminus \Sigma \to \R^{ d \times d } $ by 
\begin{equation}
    \label{eq: def of K}
    K_{ij} = ( \d_j - \nu_j \d_\nu ) \tilde{ \nu }_i \, , 
    \quad i,j = 1 , \ldots , n \, . 
\end{equation}
Since the derivative is taken in a direction tangent to $ \d \Omega $, the entry $ K_{ij} $ does not depend on the choice of the extension $ \tilde{ \nu } $, so $ K $ is well-defined. 
Using $ \abs{ \tilde{ \nu } } = 1 $, one easily verifies that $ \tr K = \div \tilde{ \nu } $.
We further define the matrix 
\begin{equation}
    \label{eq: def of B}
    B : \d \Omega \setminus \Sigma \to \R^{ d \times d } \, , 
    \quad B(x) = K(x) + ( \tr K (x) ) \nu(x) \nu(x)^t \, . 
\end{equation}
For $ x \in \d \Omega \setminus \Sigma $, this is a symmetric matrix whose $ d $ eigenvalues are given by the $ (d-1) $ principal curvatures of $ \d \Omega $ at $ x $ and their sum. 
See~\cite[Chapter~7]{elem_diffgeo} for a definition of the Weingarten map (here $ K $) in three dimensions, and~\cite{weingarten_map} for details on representing it via the unit normal. 

We reformulate Lemma~\ref{lem: ibp convex} in the following more general form. Here, we denote by $ DF $ the Jacobi matrix of a vector field $ F $. 
The Betti number $ \beta_n $ of $ \Omega $ are defined as the dimension of the $ n $-the singular homology group $ H_n ( \Omega , \R ) $, see~\cite[Chapter~13]{alg_top} for details. 
\begin{proposition}
    \label{lem: ibp on conv dom}
    Let $ \Omega \subseteq \R^d $ be a piecewise smooth Lipschitz domain, and suppose that $ d=2 $ or that the Betti number $ \beta_{d-2} $ of $ \Omega $ is zero.  
    Let $ F \in \H^1 ( \Omega )^d $ be a curl-free real-valued vector field normal at the boundary.
    Then, for any $ b \in \R^d $, we have the integration-by-parts identity 
    \begin{equation}
        \label{eq: goal in ibp lemma}
        \int_\Omega \abs{ \nabla ( b \cdot F ) }^2 
        = \int_\Omega \div F \, ( b^t D F b ) 
        - \frac{1}{2} \int_{ \d \Omega } \abs{F}^2 \, 
        ( b^t B b ) \, .
    \end{equation}  
\end{proposition}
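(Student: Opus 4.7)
The plan is to prove the identity by two successive integrations by parts: first in the bulk of $ \Omega $ to isolate the factor $ \div F $, and then along the hypersurface $ \d \Omega $ to produce the curvature matrix $ B $. By a density argument it suffices to treat smooth $ F $, and this is one place where the topological hypothesis $ \beta_{d-2} ( \Omega ) = 0 $ (or the low-dimensional case $ d = 2 $) enters: it allows the construction of approximants that preserve both the curl-free condition and the normal-trace boundary condition.

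The curl-free condition gives $ \d_i F_j = \d_j F_i $, so $ \nabla ( b \cdot F ) = ( b \cdot \nabla ) F $. Integrating one derivative by parts in $ \Omega $ and reusing curl-freeness to commute the second-order derivatives yields
\[
    \int_\Omega \abs{ \nabla ( b \cdot F ) }^2 - \int_\Omega \div F \, ( b^t DF b )
    = \int_{ \d \Omega } \Big[ \tfrac{1}{2} ( b \cdot \nu ) \d_b \abs{ F }^2 - ( F \cdot \nu ) ( b^t DF b ) \Big] \, dS \, .
\]
This already produces the interior integrand of the target formula, and it only remains to reduce the boundary integral to $ - \tfrac{1}{2} \int_{ \d \Omega } \abs{ F }^2 \, b^t B b $.

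For this, I would decompose $ b = b_T + ( b \cdot \nu ) \nu $ at the boundary and write $ F = \phi \nu $ on $ \d \Omega $ with $ \phi = F \cdot \nu $. Splitting $ \d_k = \d_k^T + \nu_k \d_\nu $ and using that tangential derivatives of the trace $ \phi \nu $ can be expressed through $ \nabla_T \phi $ and the matrix $ K $, a direct computation gives
\[
    b^t DF b = ( b \cdot \nu ) ( b_T \cdot \nabla_T \phi ) + \phi \, b^t K b + ( b \cdot \nu ) ( b \cdot \d_\nu F ) \, ,
\]
and an analogous tangential-normal decomposition for $ \d_b \abs{ F }^2 $. The curl-free identity $ ( \d_\nu F )_j = \nu_i \d_j F_i $, combined with the relations $ K \nu = K^t \nu = 0 $ (both consequences of $ \abs{ \tilde \nu }^2 = 1 $), then yields the crucial simplification $ b_T \cdot \d_\nu F = b_T \cdot \nabla_T \phi $. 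Plugging in, the boundary integrand collapses to $ - \phi ( b \cdot \nu )( b_T \cdot \nabla_T \phi ) - \phi^2 b^t K b $. A final integration by parts along the hypersurface, exploiting $ \div_T \nu = \tr K $, $ \nabla_T ( b \cdot \nu ) = K^t b $, and the identity $ \div_T [ ( b \cdot \nu ) b_T ] = b^t K b - ( b \cdot \nu )^2 \tr K $, converts the remaining gradient term into $ \tfrac{1}{2} \phi^2 [ ( b \cdot \nu )^2 \tr K - b^t K b ] $, and the total is exactly $ - \tfrac{1}{2} \phi^2 ( b^t K b + ( \tr K )( b \cdot \nu )^2 ) = - \tfrac{1}{2} \abs{ F }^2 b^t B b $, in view of $ B = K + ( \tr K ) \nu \nu^t $.

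The main obstacle is justifying this last surface integration by parts on the piecewise smooth hypersurface $ \d \Omega $. Each smooth piece of $ \d \Omega $ has boundary contained in the singular set $ \Sigma $ of dimension at most $ d - 2 $, and a naive IBP on $ \d \Omega \setminus \Sigma $ leaves residual $ ( d - 2 ) $-dimensional contributions there. It is in controlling these contributions that the hypothesis $ d = 2 $ (where $ \Sigma $ is zero-dimensional and invisible to the surface integral) or $ \beta_{d-2} ( \Omega ) = 0 $ plays its role, either through a topological/cohomological cancellation across adjacent smooth faces or via an exhaustion of $ \Omega $ by smooth domains for which the $ ( d - 2 ) $-dimensional contributions vanish in the limit. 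Carrying out this exhaustion while maintaining the curl-free and normal-trace conditions for the approximating fields is the most delicate part of the argument.
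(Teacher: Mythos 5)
Your smooth-case computation is correct and, apart from bookkeeping, reproduces the paper's key boundary lemma: the paper organizes the same cancellation through the antisymmetrized operators $ \d^{lj} = \nu_l \d_j - \nu_j \d_l $ and one application of the divergence theorem on the boundary, whereas you use the splitting $ b = b_T + ( b \cdot \nu ) \nu $ and $ F = \phi \nu $ on $ \d \Omega $; both give the same identity. One small slip: after the surface integration by parts the gradient term becomes $ + \tfrac{1}{2} \phi^2 \left[ b^t K b - ( b \cdot \nu )^2 \tr K \right] $, not its negative; with the sign you state the total would not reduce to $ - \tfrac{1}{2} \phi^2 \, b^t B b $, although the final expression you write down is the correct one.

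The genuine gap is the passage from smooth fields to a general curl-free, normal $ F \in \H^1 ( \Omega )^d $, which you only gesture at, and where you misplace the role of the hypotheses. The assumption $ d = 2 $ or $ \beta_{d-2} ( \Omega ) = 0 $ is not there to control residual $ (d-2) $-dimensional contributions of the surface integration by parts near $ \Sigma $: in the paper such contributions never arise, because $ \Sigma $ has zero capacity, so one can choose smooth approximants $ G^n \to F $ in $ \H^1 ( \Omega )^d $ supported away from $ \Sigma $, and after correcting their tangential trace with a bounded extension operator the boundary computation takes place entirely inside the smooth faces with compactly supported integrands. The topological hypothesis enters elsewhere: the trace-corrected fields $ U^n $ are no longer exactly curl-free, and curl-freeness is restored by solving the boundary value problem $ \curl X^n = \curl U^n $ in $ \Omega $, $ X^n = 0 $ on $ \d \Omega $, with $ \norm{ X^n }_{ \H^1 } \les \norm{ \curl U^n } \to 0 $; the solvability of this problem, via Hodge decomposition theory, is precisely where $ \beta_{d-2} = 0 $ (resp.\ $ d = 2 $) is used. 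Your alternative of exhausting $ \Omega $ by smooth domains faces real obstructions --- the curvature matrix $ B $ and the boundary measure change along the exhaustion, the approximating fields must be made normal on the new boundaries, and convergence of the boundary integrals for a mere $ \H^1 $ field is unclear --- so as written this step would not go through; the capacity-plus-Hodge approximation is the missing ingredient, after which one applies the smooth identity to the approximants and passes to the limit in $ \H^1 $.
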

The condition on the Betti number is probably not necessary.
Indeed, this condition is not needed for smooth vector fields on smooth domains, but only to obtain curl-free approximations of vector fields in $ \H^1 ( \Omega )^d $. If $ d \geq 3 $ and $ \Omega $ is convex or homeomorphic to a ball, then the condition on the Betti number is satisfied. 

To see that Lemma~\ref{lem: ibp convex} follows from Proposition~\ref{lem: ibp on conv dom} it remains to see that $ F = \nabla \varphi $ satisfies the right assumptions when $ \varphi \in \H^2 ( \Omega ) \cap \H^1_0 ( \Omega ) $. 
\begin{lemma}
    \label{lem: grad phi normal}
    Let $ \Omega \subseteq \R^d $ be a piecewise smooth Lipschitz domain and $ \varphi \in \H^2 ( \Omega ) \cap \H^1_0 ( \Omega ) $. 
    Then $ F = \nabla \varphi \in \H^1 ( \Omega ) $ is curl-free and normal at the boundary. 
\end{lemma}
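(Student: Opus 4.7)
The first assertion is immediate from the equality of mixed second weak partial derivatives for $\H^2$ functions: for $\varphi \in \H^2(\Omega)$ the derivatives $\d_i \d_j \varphi$ and $\d_j \d_i \varphi$ coincide as elements of $\L^2(\Omega)$, as one sees by approximating $\varphi$ in the $\H^2$-norm by smooth functions. Hence each component $(\curl F)_{ij} = \d_j \d_i \varphi - \d_i \d_j \varphi$ with $1 \leq i<j \leq d$ vanishes identically.

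For the normal trace assertion I would argue locally. Fix a point $\xo \in \d\Omega \setminus \Sigma$; since $\d\Omega$ is smooth near $\xo$, there exists a $\Cont^\infty$-diffeomorphism $\Phi$ from a neighborhood $U$ of $\xo$ onto an upper half-ball $B^+ = B_r \cap \{ y_d > 0\}$ mapping $\d\Omega \cap U$ onto the flat face $\Gamma = B_r \cap \{ y_d = 0\}$. The pullback $\tilde\varphi = \varphi \circ \Phi^{-1}$ lies in $\H^2(B^+)$ and, since traces are preserved under smooth diffeomorphisms, has zero trace on $\Gamma$. For each $i \in \{1, \ldots, d-1\}$ the function $\d_i \tilde\varphi$ belongs to $\H^1(B^+)$, and because the trace operator commutes with tangential differentiation its trace on $\Gamma$ equals $\d_i(\tilde\varphi|_\Gamma) = 0$. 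Unravelling the diffeomorphism via the chain rule shows that any tangent vector to $\d\Omega \cap U$ annihilates the trace of $\nabla\varphi$; in other words, the boundary trace of $F=\nabla\varphi$ on $\d\Omega \cap U$ is parallel to $\nu$.

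Covering $\d\Omega \setminus \Sigma$ by countably many such neighborhoods and using that $\Sigma$ has vanishing boundary measure then yields the full claim. The only mildly delicate point is the commutation of trace with tangential differentiation for $\H^1$-functions, which follows from density of $\Cont^\infty(\overline{B^+})$ in $\H^1(B^+)$ together with continuity of the trace operator; all remaining steps are routine applications of the $\H^2$-chain rule under $\Cont^\infty$-diffeomorphisms, so I do not expect any substantial obstacle here.
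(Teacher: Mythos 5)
Your proof is correct, and for the curl-free part it coincides with the paper's one-line argument; for the normality of the trace, however, you take a somewhat different route. The paper argues globally: it approximates $\varphi$ in $\H^2(\Omega)$ by functions $\varphi_k = \psi_k - E(\psi_k|_{\d\Omega})$ that vanish exactly on $\d\Omega$ (using a bounded right inverse $E:\H^{3/2}(\d\Omega)\to\H^2(\Omega)$ of the trace map), observes for each approximant that differentiating along smooth curves in $\d\Omega\setminus\Sigma$ forces $\nabla\varphi_k$ to be a multiple of $\nu$ there, and then passes to the limit using $\nabla\varphi_k|_{\d\Omega}\to\nabla\varphi|_{\d\Omega}$ in $\L^2(\d\Omega)$. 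You instead localize near a point of $\d\Omega\setminus\Sigma$, flatten the boundary by a $\Cont^\infty$-diffeomorphism, and use that the trace of $\d_i\tilde\varphi$ on the flat face equals the tangential derivative of the (zero) trace of $\tilde\varphi$; since $\tilde\varphi\in\H^2$ this commutation is legitimate by density of $\Cont^\infty(\overline{B^+})$ in $\H^2(B^+)$ and continuity of the trace (your phrasing ``for $\H^1$-functions'' is slightly loose, but in your application the differentiated function is $\H^1$ because $\tilde\varphi$ is $\H^2$, so no harm is done). Both arguments rest on the same underlying fact -- the tangential part of $\nabla\varphi$ vanishes wherever $\varphi$ has zero trace on a smooth boundary piece -- but your chart-based version avoids the construction of approximants vanishing exactly on $\d\Omega$ and the curve argument, at the cost of chart bookkeeping and a covering argument over $\d\Omega\setminus\Sigma$ (harmless, since $\Sigma$ has zero boundary measure), whereas the paper's global approximation treats all of $\d\Omega\setminus\Sigma$ at once. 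I see no genuine gap in your version.
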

\begin{proof}
    For $ i,j \in \{ 1 , \ldots , d \} $ we have $ \d_i F_j - \d_j F_i = \d_{ij} \varphi - \d_{ji} \varphi = 0 $. 
    
    To prove that $ \nabla \varphi = \d_\nu \varphi \nu $ on $ \d \Omega \setminus \Sigma $, approximate $ \varphi $ in $ \H^2 ( \Omega ) $ by a sequence $ \psi_k \in \Cont_c^\infty ( \R^d ) $. Then $ \psi_k |_{ \d \Omega } \to \varphi |_{ \d \Omega } = 0 $ in $ \H^{3/2} ( \d \Omega ) $.
    Denote by $ E : \H^{3/2} ( \d \Omega ) \to \H^2 ( \Omega ) $ a bounded right-inverse of the trace operator (see~\cite[Chapter~2.5.6]{necas}).
    Then the functions 
    \begin{equation}
        \label{eq: def of approx phi k}
        \varphi_k = \psi_k - E ( \psi_k |_{ \d \Omega } ) 
    \end{equation}
    are smooth, vanish on $ \d \Omega $ and approximate $ \varphi $ in $ \H^2 ( \Omega ) $.
    
    Fix $ k \in \N $. For $ p \in \d \Omega \setminus \Sigma $ and any smooth curve $ \gamma : (-1,1) \to \d \Omega $ with $ \gamma (0) = p $, we have $ \varphi_k \circ \gamma \equiv 0 $. This gives 
    \begin{equation}
        \label{eq: grad phi normal}
        0 = \ddt ( \varphi_k \circ \gamma ) (t) |_{ t=0 } 
        = \nabla \varphi_k (p) \cdot \gamma' (0) \, . 
    \end{equation}
    So $ \nabla \varphi_k (p) $ is perpendicular to every tangential direction at $ p $, hence equal to a scalar multiple of $ \nu (p) $. 
    From $ \nabla \varphi_k |_{ \d \Omega } \to \nabla \varphi |_{ \d \Omega } $ in $ \L^2 ( \d \Omega ) $ we deduce that also $ \nabla \varphi $ is normal on~$ \d \Omega $. 
\end{proof}

For the proof of Proposition~\ref{lem: ibp on conv dom}, we first work in a smooth setting; the computations in the proof of the following lemma appear in~\cite{levine_weinberger}. 
\begin{lemma}
    \label{lem: smooth ibp convex}
    Suppose $ F $ is a smooth curl-free vector field normal at the boundary whose support has positive distance to the irregular part $ \Sigma \subseteq \d \Omega $. 
    Then for $ b \in \R^d $ we have 
    \begin{equation}
        \int_{ \d \Omega } \Big( ( b \cdot F ) \,
        \d_\nu ( b \cdot F ) 
        -  \div F \, ( b \cdot F ) \, ( b \cdot \nu ) \Big) 
        = - \frac{1}{2} \int_{ \d \Omega } \abs{F}^2 \, 
        ( b^t B(x) b ) \, . 
    \end{equation}
\end{lemma}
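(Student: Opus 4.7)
The plan is to reduce the identity to a purely tangential calculation on the smooth piece $\partial\Omega \setminus \Sigma$, exploiting that $F$ is normal on $\partial\Omega$. I would start by decomposing $F = g \tilde\nu + H$ near $\partial\Omega \setminus \Sigma$, where $g := F \cdot \tilde\nu$ extends $f := F \cdot \nu|_{\partial\Omega}$ smoothly and $H := F - g\tilde\nu$ satisfies $H \cdot \tilde\nu \equiv 0$ and $H|_{\partial\Omega} = 0$ (the latter since $F$ is normal there). Because $K$ depends on $\tilde\nu$ only through tangential derivatives, I am free to fix the nearest-point extension $\tilde\nu$, for which $\partial_\nu \tilde\nu \equiv 0$ on $\partial\Omega$. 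Throughout, I will write $b_\tau := b - (b \cdot \nu)\nu$ and let $\nabla_T$, $\div_T$ denote the tangential gradient and divergence along $\partial\Omega$.

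Next, two boundary identities follow from curl-freeness $\partial_k F_j = \partial_j F_k$ and from $|\tilde\nu|^2 \equiv 1$ (giving $\tilde\nu \cdot \partial_j \tilde\nu \equiv 0$):
\begin{equation}
\partial_\nu F = \nabla g \quad \text{and} \quad \div F = \partial_\nu g + f \tr K \quad \text{on } \partial\Omega.
\end{equation}
For the first, curl-freeness yields $\nu_k \partial_k F_j = \nu_k \partial_j F_k = \partial_j g - F_k \partial_j \tilde\nu_k$ in a neighbourhood, and the last term vanishes on $\partial\Omega$ because $F = f\nu$ there and $\nu_k \partial_j \tilde\nu_k = \frac{1}{2} \partial_j |\tilde\nu|^2 = 0$. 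For the second, $\div \tilde\nu = \tr K$ is immediate from the definition of $K$, while $\div H = 0$ on $\partial\Omega$ follows from $H = 0$ there (making $\nabla H_i$ normal at the boundary) combined with $(\partial_\nu H) \cdot \nu = 0$ (obtained by differentiating $H \cdot \tilde\nu \equiv 0$ and using $\partial_\nu \tilde\nu = 0$). Decomposing $b \cdot \nabla g = (b \cdot \nu)\partial_\nu g + b_\tau \cdot \nabla_T f$ and plugging both identities into the integrand makes the $\partial_\nu g$ contributions cancel, leaving
\begin{equation}
(b \cdot F) \partial_\nu (b \cdot F) - \div F \, (b \cdot F)(b \cdot \nu) = f (b \cdot \nu) (b_\tau \cdot \nabla_T f) - f^2 (b \cdot \nu)^2 \tr K.
\end{equation}

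To finish, I would rewrite $f (b_\tau \cdot \nabla_T f) = \tfrac{1}{2} b_\tau \cdot \nabla_T(f^2)$ and integrate by parts along $\partial\Omega \setminus \Sigma$; the assumption $\mathrm{dist}(\supp F, \Sigma) > 0$ ensures $f^2$ has compact support in the smooth part, so no residual boundary contributions arise. This produces $-\tfrac{1}{2} \int_{\partial\Omega} f^2 \div_T((b \cdot \nu) b_\tau)$. For the tangential divergence, use that $b$ is constant (so $\div_T b = 0$) and $\div_T \nu = \tr K$, giving $\div_T b_\tau = -(b \cdot \nu) \tr K$; together with $\nabla_T(b \cdot \nu) = K b_\tau$, which follows from $\partial_a \tilde\nu_i = K_{ia}$ for tangential $a$ and the symmetry of $K$, this yields $\div_T((b \cdot \nu) b_\tau) = b_\tau^t K b_\tau - (b \cdot \nu)^2 \tr K$. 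Combining this with the residual $-f^2 (b \cdot \nu)^2 \tr K$ term and using $b^t K b = b_\tau^t K b_\tau$ (because $K$ annihilates $\nu$) produces the claimed $-\tfrac{1}{2} \int_{\partial\Omega} |F|^2 b^t B b$.

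The main obstacle is the careful bookkeeping of tangential versus normal contributions; the whole argument really hinges on the single identity $\nabla_T(b \cdot \nu) = K b_\tau$, which is the point where the Weingarten map enters the tangential integration by parts. Everything else is relatively mechanical once the right extension $\tilde\nu$ is chosen.
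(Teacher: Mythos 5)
Your proof is correct, and it reaches the identity by a somewhat different route than the paper. The paper works directly with the antisymmetrized tangential operators $\partial^{lj} = \nu_l\partial_j - \nu_j\partial_l$: after using curl-freeness to write $\partial_\nu(b\cdot F) - \operatorname{div}F\,(b\cdot\nu) = \sum_{j,l} b_j\,\partial^{lj}F_l$ and inserting $F = f\nu$ on the boundary, it splits the integrand into the exact term $\tfrac12\partial^{lj}(f^2\nu_i\nu_l)$, whose boundary integral vanishes by the divergence theorem, plus a remainder that is identified pointwise with $-\tfrac12 f^2 B_{ij}$ using only $|\nu|\equiv 1$ and $\tr K = \operatorname{div}\nu$; no splitting of $b$ and no special extension of $\nu$ are needed. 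You instead decompose $F$ (and $b$) into normal and tangential parts, derive the two pointwise boundary identities $\partial_\nu F = \nabla g$ and $\operatorname{div}F = \partial_\nu g + f\,\tr K$, and then perform the tangential integration by parts on $(b\cdot\nu)\,b_\tau\cdot\nabla_T(f^2)$, which requires the additional (standard) surface-calculus facts $\nabla_T(b\cdot\nu) = K b_\tau$, $\operatorname{div}_T\nu = \tr K$, the symmetry of $K$, and the tangential divergence theorem for tangential fields with support away from $\Sigma$. Your choice of the nearest-point extension is legitimate precisely because $K$ and $B$ are extension-independent, though note that $\partial_\nu\tilde\nu = 0$ is not really needed for $(\partial_\nu H)\cdot\nu = 0$: the term $H\cdot\partial_\nu\tilde\nu$ already vanishes on $\partial\Omega$ since $H = 0$ there. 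The trade-off is that your version is more geometric and makes the role of the Weingarten map explicit through $\nabla_T(b\cdot\nu) = Kb_\tau$, while the paper's index computation is shorter and needs fewer auxiliary identities; both hinge on the same two inputs (curl-freeness and normality of $F$) and a single tangential integration by parts.
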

\begin{proof}
    Fix an extension $ \tilde{ \nu } $ on the support of $ F $, and denote it again by $ \nu $.
    Using $ \d_l F_j = \d_j F_l $ we compute 
    \begin{equation}
        \d_\nu ( b \cdot F ) - \div F \, ( b \cdot \nu ) 
        = \summ_{j,l} b_j ( \nu_l \d_l F_j - \nu_j \d_l F_l )
        = \summ_{j,l} b_j ( \nu_l \d_j - \nu_j \d_l ) F_l \, . 
    \end{equation}
    Since $ F = f \nu $ on $ \d \Omega $ with $ f = F \cdot \nu : \d \Omega \to \R $, writing $ \d^{lj} = ( \nu_l \d_j - \nu_j \d_l ) $ we have
    \begin{multline}
        ( b \cdot F ) ( \d_\nu ( b \cdot F ) 
        - \div F \, ( b \cdot \nu ) )
        = \summ_{i,j,l} b_i b_j f \nu_i 
        \, \d^{lj} ( f \nu_l ) \\
        = \frac{1}{2} \summ_{i,j,l} b_i b_j 
        \d^{lj} ( f^2 \nu_i \nu_l ) 
        - \frac{1}{2} \summ_{i,j} b_i b_j 
        f^2 \left( \summ_l \nu_l \d^{lj} \nu_i 
        - \nu_i \d^{lj} \nu_l \right) \, . 
    \end{multline}
    By the divergence theorem, the boundary integral of the first sum in the last line above vanishes. 
    For the second sum, recalling $ \abs{ \nu } \equiv 1 $ and $ \tr K = \div \nu $ we compute 
    \begin{align}
        \summ_l \nu_l \d^{lj} \nu_i - \nu_i \d^{lj} \nu_l 
        &= \Big( \summ_l \nu_l^2 \Big) \d_j \nu_i 
        - \nu_j \d_\nu \nu_i 
        - \frac{1}{2} \nu_i \d_j \Big( \summ_l \nu_l^2 \Big )
        + \nu_i \nu_j \div \nu \\
        &= K_{ij} + ( \tr K ) \nu_i \nu_j = B_{ij}
    \end{align}
    Integrating over $ \d \Omega $ and recalling $ f^2 = \abs{F}^2 $ we arrive at 
    \begin{equation}
        \int_{ \d \Omega } ( b \cdot F ) 
        ( \d_\nu ( b \cdot F ) 
        - \div F ( b \cdot \nu ) ) 
        = -\frac{1}{2} \summ_{i,j} \int_{ \d \Omega } 
        b_i b_j f^2 B_{ij}  
        = -\frac{1}{2} \int_{ \d \Omega } 
        \abs{F}^2 ( b^t B b ) \, . 
        \qedhere
    \end{equation}
\end{proof}
We now complete the proof Lemma~\ref{lem: ibp on conv dom} via integration by parts, and remove the requirement that $ F $ be smooth by approximation.
\begin{proof}[Proof of Proposition~\ref{lem: ibp on conv dom}]
    Let $ F $ satisfy the assumptions on the theorem. 
    Since $ \Sigma \subseteq \R^d $ has zero capacity (see~\cite[Corollary~5.1.15]{adams_hedberg}), by~\cite[Chapter~8, Theorem~6.3]{edmunds_evans} we find a sequence of smooth vector fields $ G^n \subseteq \Cont_c^\infty ( \R^d )^d $ supported away from $ \Sigma $ such that $ G^n \to F $ in $ \H^1 ( \Omega )^d $. 
    Then $ G^n |_{ \d \Omega } \to F |_{ \d \Omega } $ in $ \H^{1/2} ( \d \Omega )^d $. 
    Since $ F $ is normal at the boundary, the tangential projections satisfy 
    \begin{equation}
        \label{eq: tangent proj goes to zero}
        g^n = G^n |_{ \d \Omega } - ( \nu \cdot G^n |_{ \d \Omega } ) \nu \to 0 
        \quad \text{ in } \H^{1/2} ( \d \Omega )^d 
    \end{equation} 
    (indeed, let $ \Gamma_1 , \ldots , \Gamma_m $ denote the smooth connected components of $ \d \Omega \setminus \Sigma $; as $ G^n $ is supported away from $ \Sigma $ we have $ \norm{ G^n |_{ \d \Omega } }^2_{ \H^{1/2} ( \d \Omega ) } = \summ_i \norm{ G^n |_{ \d \Omega } }^2_{ \H^{1/2} ( \Gamma_i ) } $, and on each smooth component the unit normal is smooth, so the projection is $ \H^{1/2} $-continuous). 
    Denoting by $ E : \H^{1/2} ( \d \Omega ) \to \H^1 ( \Omega ) $ a bounded extension operator (see~\cite[Chapter~2.5.6]{necas}), we obtain a sequence of smooth vector fields $ U^n = G^n - E g^n $ with normal boundary trace, vanishing in a neighborhood of $ \Sigma $ and with $ U^n \to F $ in $ \H^1 ( \Omega )^d $. 
    In particular $ \curl U^n \to \curl F = 0 $ in $ \L^2 ( \Omega ) $. 
    Now consider the boundary value problem 
    \begin{equation}
        \left\{
        \begin{aligned}
            \curl X &= \curl U^n 
            \quad &\text{ in } \Omega \, , \\
            X &= 0 
            \quad &\text{ on } \d \Omega \, .
        \end{aligned}
        \right.
    \end{equation}
    By~\cite[Theorem~3.3.3]{hodge_pdes} there is a smooth solution $ X^n \in \H^1_0 ( \Omega )^d $ with $ \norm{ X^n }_{ \H^1 } \les \norm{ \curl U^n } $.\footnote{Using the notation of~\cite{hodge_pdes}, note that $ \mathbf{t} ( \curl U^n )^\sharp = 0 $ follows from $ U^n $ normal at the boundary (i.e., $ \mathbf{t} U^\sharp = 0 $) and the commutation $ \mathbf{t} \dif = \dif \mathbf{t} $, see~\cite[Propositions~1.2.6 and~3.5.1]{hodge_pdes} for details; finally note that for $ d \geq 3 $, the integrability condition from Theorem~3.3.3 is empty by the condition $ \beta_{d-2} = 0 $, implying $ \mathcal{H}^2_D ( \Omega ) = \{ 0 \} $ (see the remark after Theorem~2.2.2 in~\cite{hodge_pdes}), or for $ d=2 $ by the fact that $ \mathcal{H}^2_D ( \Omega ) = \spann \{ \one \} $ and the fact that $ U^n $ is normal at the boundary.}
    
    Then $ F^n = U^n - X^n $ gives a sequence of smooth curl-free vector fields normal at the boundary approximating $ F $ in $ \H^1 ( \Omega )^d $, and whose supports do not intersect $ \Sigma $. 
    By Lemma~\ref{lem: smooth ibp convex}, the identity~\eqref{eq: goal in ibp lemma} holds true for each $ F^n $.
    Thus integration by parts, $ \curl F^n = 0 $ and Lemma~\ref{lem: smooth ibp convex} give 
    \begin{align}
        \int_\Omega \abs{ \nabla ( b \cdot F^n ) }^2 
        &= -\int_\Omega ( b \cdot \Delta F^n ) \, ( b \cdot F^n ) 
        + \int_{ \d \Omega } ( b \cdot F^n ) 
        \d_\nu ( b \cdot F^n ) \\
        &= - \int_\Omega ( b \cdot \nabla \div F^n ) \, ( b \cdot F^n ) 
        + \int_{ \d \Omega } ( b \cdot F^n ) 
        \d_\nu ( b \cdot F^n ) \\
        &= \int_\Omega \div F^n \, ( b^t D F^n b ) 
        + \int_{ \d \Omega } ( b \cdot F^n ) 
        \d_\nu ( b \cdot F^n ) - \div F^n \, 
        ( b \cdot F^n ) \, ( b \cdot \nu ) \\
        &= \int_\Omega \div F^n \, ( b^t D F^n b ) 
        - \frac{1}{2} \int_{ \d \Omega } 
        \abs{F^n}^2 ( b^t B(x) b ) \, . 
    \end{align}
    Taking limits in $ \H^1 ( \Omega )^d $, the identity continues to hold when $ F^n $ is replaced by $ F $.  
\end{proof}

\subsection*{Competing interests}

The author has no competing interests to declare.

\subsection*{Acknowledgments}

The  author gratefully acknowledges financial support by the grant no. 2022-03342 of the Swedish Research Council (VR).

\end{document}